\newtheorem{theorem}{Theorem}[section]
\newtheorem{lemma}[theorem]{Lemma}
\newtheorem{proposition}[theorem]{Proposition}
\newtheorem{definition}[theorem]{Definition}
\newtheorem{conjecture}[theorem]{Conjecture}
\newtheorem{problem}[theorem]{Problem}
\newtheorem{remark}[theorem]{\bf{Remark}}
\numberwithin{equation}{section}
\newcommand{\bp}{\bar{\partial}}
\newcommand{\beq}{\begin{equation}}
	\newcommand{\eeq}{\end{equation}}
\newcommand{\beqn}{\begin{equation*}}
	\newcommand{\eeqn}{\end{equation*}}
\newcommand{\C}{\mathbb{C}}
\newcommand{\R}{\mathbb{R}}
\newcommand{\N}{\mathbb{N}}
\newcommand{\CP}{\mathbb{P}}
\newcommand{\T}{\mathbb{T}}
\newcommand{\supp}{\mathrm{supp}}
\newcommand{\pp}{\partial\bar{\partial}}
\newcommand{\p}{\partial}
\newcommand{\w}{\omega}
\newcommand{\ka}{\kappa}
\newcommand{\im}{\sqrt{-1}}
\newcommand{\mF}{\mathcal{F}}
\newcommand{\mO}{\mathcal{O}}
\DeclareMathOperator \rank{rank}
\DeclareMathOperator \tr{tr}
\DeclareMathOperator{\Rm}{Rm}
\DeclareMathOperator{\rc}{Ric}
\DeclareMathOperator{\spaned}{span}
\DeclareMathOperator{\duallefs}{\Lambda}
\begin{document}
	
	\vspace*{0.5cm} \begin{center}\noindent {\LARGE \bf On the structure of compact K\"ahler manifolds with nonnegative holomorphic sectional curvature  }\\[0.7cm]
		Shiyu Zhang\footnotemark[1]
		\\[0.2cm]
		School of Mathematical Sciences\\
		University of Science and Technology of China\\ Hefei, 230026, P.R. China\\
		E-mail: shiyu123@mail.ustc.edu.cn\\[0.5cm]
		Xi Zhang\footnotemark[1]
		\\[0.2cm]
		School of Mathematics and Statistics\\
		Nanjing University of Science and Technology\\
		Nanjing, 210094, P.R. China\\
		E-mail: mathzx@njust.edu.cn\\[1cm]
		\footnotetext[1]{\notag\noindent The research was supported by the National Key R and D Program of China 2020YFA0713100, All authors were supported in part by NSF in China, No.12141104, 12371062 and 11721101.}
		
	\end{center}
	

	{\bf Abstract.} In this paper, we establish a ``pseudo-effective" version of the holonomy principle for compact K\"ahler manifolds with nonnegative holomorphic sectional curvature. As applications, we prove that if a compact complex manifold $M$ admits a K\"ahler metric $\omega$ with nonnegative holomorphic sectional curvature and $(M,\w)$ has no nonzero truly flat tangent vector at some point (which is satisfied when the holomorphic sectional curvature is quasi-positive), then $M$ must be projective and rationally connected. This result answers a problem raised by Yang and Matsumura and extends the Yau's conjecture. We also prove that a compact simply connected K\"ahler manifold with nonnegative holomorphic sectional curvature is projective and rationally connected. Additionally, we classify non-projective compact K\"ahler 3-dimensional manifolds. Furthermore, we show that a compact K\"{a}hler manifold admits a Hermitian metric with positive real bisectional curvature is a projective and rationally connected manifold.
	\\[1cm]
	{\bf AMS Mathematics Subject Classification.} 53C07, 58E15 \\
	{\bf
		Keywords and phrases.} K\"ahler metric,\ holomorphic sectional curvature,\ projective,\ rational connectedness
	
	
	\newpage
	
	\section{Introduction}
	
	A projective manifold $M$ is called rationally connected if any two points on $M$ can be connected by some rational curve. The rational connectedness is an important and fundamental concept in algebraic geometry, and many people have given the criteria for it (\cite{campana2015,GHS,Pet,LP17,Cam16,KMM}). It is also important to provide geometric interpretations for rational connectedness. Let $\omega $ be a K\"ahler metric on $M$, the holomorphic sectional curvature of $\omega$ along a nonzero  $V\in T^{1,0}M$ is defined by
	$$H(V)=\frac{R(V,\overline{V},V,\overline{V})}{\vert V\vert_{\omega}^4},$$
	where $R$ is the Riemannian curvature tensor of $\omega$.
	S.-T. Yau proposed the following well-known conjecture in his problem section:
	\begin{conjecture}[{\cite[Problem 47]{yau1982}}]\label{yau}
		If a compact complex manifold $M$ admits a K\"{a}hler metric $\omega$ with positive holomorphic sectional curvature, then $M$ is projective and rationally connected.
	\end{conjecture}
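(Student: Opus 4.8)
The plan is to reduce the entire statement to a single analytic vanishing result, namely that $(M,\w)$ admits no nonzero holomorphic pluri-form: $H^{0}\bigl(M,(\Omega^{1}_{M})^{\otimes m}\bigr)=0$ for every $m\geq 1$. Granting this, the asserted vanishing $H_{\overline{\partial}}^{p,0}(M)=H^{0}(M,\Omega^{p}_{M})=0$ is immediate, because in characteristic zero the antisymmetrizer realizes $\Omega^{p}_{M}=\wedge^{p}\Omega^{1}_{M}$ as a holomorphic direct summand of $(\Omega^{1}_{M})^{\otimes p}$. Projectivity is then elementary: $h^{2,0}=0$ forces every real degree-two class to be of type $(1,1)$, so the open K\"ahler cone containing $[\w]$ meets the rational Hodge classes and the Kodaira embedding theorem produces an ample line bundle. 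Rational connectedness is precisely the criterion of Koll\'ar--Miyaoka--Mori and Campana: a smooth projective variety is rationally connected if and only if all of its pluri-forms vanish. Thus the whole theorem hinges on the pluri-form vanishing, and this is where positive holomorphic sectional curvature must be used (note that $H_{\overline{\partial}}^{p,0}=0$ alone would not suffice, as fake projective planes show).

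To prove the vanishing I would argue by a maximum principle in the spirit of Yang's RC-positivity. Fix a nonzero holomorphic section $s$ of $(\Omega^{1}_{M})^{\otimes m}=\bigl((T^{1,0}_{M})^{\otimes m}\bigr)^{*}$ and let $x_{0}$ be a maximum of $f=|s|^{2}_{\w}$; since $s\not\equiv 0$ we may assume $f(x_{0})>0$. At an interior maximum the complex Hessian is negative semidefinite, so for every direction $\xi\in T^{1,0}_{x_{0}}M$ the $(\xi,\overline{\xi})$-component of $\i\partial\overline{\partial}f$ is $\leq 0$; a standard Bochner computation expresses this component as $|\nabla_{\xi}s|^{2}$ minus a contraction of the Chern curvature of $(T^{1,0}_{M})^{\otimes m}$ against the dual of $s$ in the direction $\xi$. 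The heart of the matter is to choose $\xi$ adapted to $s(x_{0})$ so that this contraction is \emph{strictly positive}: reducing the tensor-power curvature to the underlying curvature of $T^{1,0}_{M}$ and invoking Berger's averaging of the holomorphic sectional curvature $H$ over the unit sphere, positivity of $H$ forces the contraction to be positive, contradicting the sign at the maximum. Hence $s\equiv 0$.

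The main obstacle is the quasi-positive case, in which $H$ is merely nonnegative and strictly positive at a single point $x_{1}$; there the maximum principle degenerates, yielding only that $f$ is constant and that the relevant curvature contraction vanishes identically, which is consistent with a nonzero parallel section. To break this degeneracy I would evolve the metric by the \krf\ $\pt\w=-\operatorname{Ric}(\w)$ with $\w$ as initial data and study $u(t)=|s|^{2}_{\w(t)}$ for the \emph{fixed} holomorphic section $s$; the complex structure is unchanged, so $s$ stays holomorphic, and a computation should yield a heat inequality $\heat u\leq -Q(t)$ with $Q(t)\geq 0$ governed by the holomorphic sectional curvature. Applying Hamilton's strong maximum principle to the scalar $u(t)$, rather than to the curvature tensor, should propagate the single point of strict positivity at $x_{1}$ to all of $M$ for every $t>0$, forcing $Q(t)>0$ and hence a strict decrease of $\max_{M}u(t)$ that is incompatible with $s$ being a fixed nonzero section. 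The delicate point — and the crux of the argument — is precisely that strict positivity of holomorphic sectional curvature is not known to be preserved along the \krf, so one cannot quote an instantaneous improvement of $H$; the strong maximum principle must instead be run directly on the quantity $u$ attached to $s$, where the degeneracy locus is an honest scalar object whose propagation can be controlled.
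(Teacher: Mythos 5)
Your analytic core --- the Bochner/maximum-principle vanishing of holomorphic tensors under strictly positive holomorphic sectional curvature, using the minimizing direction of $H$ and a Royden/Berger-type averaging --- is workable for the statement at hand, and your deduction of projectivity from $h^{2,0}=0$ via Kodaira is exactly what the paper does. The genuine gap is the reduction of rational connectedness to the vanishing $H^{0}\bigl(M,(\Omega^{1}_{M})^{\otimes m}\bigr)=0$ for all $m\geq 1$. The assertion that a smooth projective variety is rationally connected \emph{if and only if} all such pluri-forms vanish is precisely Mumford's conjecture, which is open: the ``only if'' direction is Koll\'ar--Miyaoka--Mori, but the converse is not known (see the Lazi\'c--Peternell item in the paper's bibliography, whose title is ``On a conjecture of Mumford''). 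The criterion that is actually a theorem, due to Campana--Demailly--Peternell and used by Yang, requires the strictly stronger twisted vanishing $H^{0}\bigl(M,(T^{*}_{M})^{\otimes m}\otimes A^{\otimes k}\bigr)=0$ for $m\geq c_{A}k$ with $A$ ample (equivalently, non-pseudo-effectivity of invertible subsheaves of $\Omega^{p}_{M}$). Your maximum-principle argument, as written, produces only the untwisted vanishing and hence does not yield rational connectedness; producing the ample twist is exactly what Yang's notion of uniform RC-positivity is for.

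The paper routes around this differently: it proves $H_{\overline{\partial}}^{p,0}(M)=0$ not by a pointwise maximum principle but by an \emph{integral} identity (integrating the Bochner formula against $\vert\eta\vert^{2}_{g}$ and using that holomorphic forms on a compact K\"ahler manifold are $d$-closed), deduces projectivity from $h^{2,0}=0$ and Kodaira's theorem, and then obtains rational connectedness by quoting Matsumura's structure theorem for projective manifolds with nonnegative holomorphic sectional curvature and no nonzero truly flat tangent vector at some point --- a hypothesis implied by quasi-positivity and a fortiori by positivity (in the strictly positive case one may alternatively quote Yang's original theorem). Your closing K\"ahler--Ricci-flow scheme is not needed for the positive case, and for the quasi-positive case it rests on an unproven propagation of positivity along the flow; the paper's integral identity handles quasi-positivity with no flow at all, since lower semi-continuity of $\ka$ turns a single point of strict positivity into $\int_{M}\ka\cdot\w^{n}>0$.
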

	Heier and Wong (\cite{heier2020}) confirmed Yau's conjecture for projective manifolds with quasi-positive holomorphic sectional curvature by studying MRC fibration.
	By introducing the notion of RC-positivity and studying vanishing theorems, Yang (\cite{yang2018,yang2020}) showed that a K\"ahler manifold $M$ with positive holomorphic sectional curvature is projective and rationally connected, then affirmly confirmed Yau's conjecture.
	
	There are some other interesting curvature conditions related to rational connectedness (see, for instance, \cite{ni2021,ni2022,yang2018,yang2020}). Recently, Li, Zhang and the second author (\cite[Theorem 1.14]{li2021}) established the equivalence between mean curvature quasi-positivity of the tangent bundle and rational connectedness for compact K\"{a}hler manifolds.

	In \cite{matsumura2020,matsumura2022},  Matsumura established a structure theorem for projective manifolds $X$ with nonnegative holomorphic
	sectional curvature by studying the structure of MRC fibration. Both the methods of Heier-Wong and Matsumura rely on the uniruledness criterion for projective manifolds (\cite{BDPP}). According to Matsumura's structure theorem, the universal cover of $X$ splits isometrically and holomorphically as $F\times \C^{n-k}$, where $F$ is a rationally connected manifold. This result generalizes the structure theorem established by Howard, Smith and Wu (\cite{HSW}) and Mok (\cite{mok}) for nonnegative holomorphic bisectional curvature. As a corollary, Matsumura generalized Yau's conjecture for projective manifolds (\cite[Theorem 1.2]{matsumura2022}): if a projective manifold $M$ admits a K\"{a}hler metric $\w$ with nonnegative holomorphic sectional curvature and $(M,\w)$ has no nonzero truly flat tangent vector at some point (which is satisfied when the holomorphic sectional curvature is quasi-positive), then $M$ is rationally connected.  According to \cite{heier2018}, a tangent vector $V\in T_{x_0}^{1,0}M$ is said to be truly flat at $x_0$ with respect to $\w$ if
	$$R(V,\overline{X},Y,\overline{Z})=0,\text{\ for all $X,Y,Z\in T^{1,0}M$}.$$
	Motivated by these results, Matsumura and Yang proposed the following natural generalization.
	
	\begin{conjecture}[{\cite[Conjecture 1.9]{yang2020}}, {\cite[Problem 3.3]{matsumuraopen}}]\label{c}
		Let $M$ be a compact K\"{a}hler manifold. If $M$ admits a K\"ahler metric (or more generally a Hermitian metric) with nonnegative holomorphic sectional curvature and there has no nonzero truly flat tangent vector at some point, then $M$ is projective and rationally connected.
	\end{conjecture}
	
	More generally, Matsumura proposed the following extension and suggested investigating Albanese maps instead of MRC fibrations for compact K\"ahler manifolds, due to the lack of a uniruledness criterion in the K\"ahler case. We refer the reader to \cite[Section 3]{matsumuraopen} and \cite[Section 5]{matsumura2022} for a more detailed discussion.
	
	\begin{problem}[{\cite[Problem 3.5]{matsumuraopen}}]\label{stru}
		Can the structure theorem for projective manifolds with nonnegative holomorphic sectional curvature be generalized to compact K\"ahler manifolds?
	\end{problem}
	
	Towards Conjecture \ref{c} and Problem \ref{stru}, we establish a	``pseudo-effective" version of the holonomy principle for nonnegative holomorphic sectional curvature (c.f. \cite[Theorem 1.5]{campana2015} for nonnegative Ricci curvature). This is the first main result of our paper.
	
	\begin{theorem}\label{t2}
		Let $(M,\w)$ be a compact K\"ahler manifold with nonnegative holomorphic sectional curvature. Assume there exists an invertible subsheaf $\mathcal{F}\subset\mathcal{O}(\Lambda^{p,0}M)$ that is pseudo-effective as a line bundle. Then the following statements holds:
		\begin{itemize}
			\item[(1)] $\mathcal{F}$ is flat as a line bundle and invariant under parallel transport by the connection $D$ of $\Lambda^{p,0}M$ induced by Chern connection of $(T^{1,0}M,\w)$. In fact, the local holonomy $H$ of $(T^{1,0}M,\w)$ acts trivally on $\mF$.
			\item[(2)] $\mF$ determines a parallel and self-adjoint linear tranformation $S:T^{1,0}M\rightarrow T^{1,0}M$,
			which induces a decomposition $V+W$ of $T^{1,0}M$ where $V$ and $W$ consist of eigenvectors corresponding to nonzero and zero eigenvalues of $S$, respectively; in fact, $\forall p\in M$, every $v\in V_p$ is truly flat and $\dim V_p=\rank(S)>0$.
		\end{itemize}
	\end{theorem}
	
	The core of the proof of Theorem \ref{t2} is an integral inequality associated with $\mF$ (Proposition \ref{boch}). As a direct application of Theorem \ref{t2} and the Campana-Demailly-Peternell's criterion for rational connectedness (\ref{cdp}), we obtain the second main result of our paper, which confirms Conjecture \ref{c} for K\"ahler manifolds and extends Yau's conjecture to the quasi-positive case. Our approach is independent of those by Heier-Wong (\cite{heier2020}) and Matsumura (\cite{matsumura2022}) for projective manifolds.
	
	\begin{theorem}\label{quasi}
		Let $(M,\w)$ be a compact K\"{a}hler manifolds. If the holomorphic sectional curvature holomorphic sectional curvature is nonnegative and $(M,\w)$ has no nonzero truly flat tangent vector at some point, then $M$ is projective and rationally connected.
	\end{theorem}
	
	\begin{remark}
		Statement (1) of Theorem \ref{t2} parallels the statement of \cite[Theorem 1.5]{campana2015} for nonnegative Ricci curvature, but the proof is more involved. Based on this statement, Campana-Demailly-Peternell's criterion for rational connectedness (Lemma \ref{cdp}), Cheeger-Gromoll's splitting theorem (\cite{cg71}), De Rham's splitting theorem (\cite{dr52}) and Berger's classification (\cite{berger}) for irreducible holonomy, Campana, Demailly and Peternell established the structure theorem of a compact K\"{a}hler manifold $M$ with nonnegative Ricci curvature (\cite[Theorem 1.4]{campana2015}): the universal cover $\widetilde{M}$ of $M$ can splits isometrically as $\C^q\times\prod Y_j\times\prod S_k\times\prod Z_l$, where $Y_j$, $S_k$ and $Z_l$ are compact simply connected K\"{a}hler manifolds of respective dimensions $n_j$, $n'_k$, $n''_l$ with irreducible holonomy. Specifically, $Y_j$ are Calabi-Yau manifolds (holonomy $SU(n_j)$), $S_k$ are holomorphic symplectic manifolds (holonomy $Sp(n'_k / 2)$), and $Z_l$ are rationally connected manifolds with $K_{Z_l}^{-1}$ semipositive (holonomy $U(n''_l)$, unless $Z_l$ is Hermitian symmetric of compact type).
	\end{remark}
	
	Comparing with \cite[Theorem 1.4]{campana2015} regarding nonnegative Ricci curvature, Ricci-flat factors can be excluded for a compact K\"ahler manifold with nonnegative holomorphic sectional curvature. As another application of Theorem \ref{t2}, we affirm Problem \ref{stru} for the simply connected case, which constitutes the third main result of our paper.
	
	\begin{theorem}\label{sc}
		If $(M,\w)$ is a compact simply connected K\"ahler manifold with nonnegative holomorphic sectional curvature, then $M$ is projective and rationally connected.
	\end{theorem}
	
	According to Theorem \ref{sc}, Problem \ref{stru} is equivalent to the following splitting theorem for nonnegative holomorphic sectional curvature, which is an analogue of Cheeger-Gromoll's splitting theorem (\cite{cg71}) for nonnegative Ricci curvature.
	\begin{problem}\label{sp}
		Let $(M,\w)$ be a compact K\"ahler manifold with nonnegative holomorphic sectional curvature. Can the universal cover $\widetilde{M}$ of $M$ splits isometrically as $N\times \C^k$ for some compact K\"ahler manifold $N$?
	\end{problem}
	
	Matsumura has affirmed Problem \ref{stru} for compact K\"ahler surfaces (\cite[Corollary 1.5]{matsumura2020}). To support Problem \ref{stru} and \ref{sp} in dimension 3, we provide the following classification of non-projective K\"ahler 3-dimensional manifolds with nonnegative holomorphic sectional curvature by considering Albanese maps. This classification can also serve as a concrete example.
	\begin{theorem}\label{lowdim}
		Let $(M,\w)$ be a non-projective compact K\"ahler 3-dimensional manifold with nonnegative holomorphic sectional curvature. Then $M$ is either flat or a $\CP^1$-bundle over a torus $\T^2$. In the latter case, the universal cover of $M$ is $\CP^1\times \C^2$ endowed with the product of a K\"ahler metric with quasi-positive holomorphic sectional curvature on $\CP^1$ and the flat metric on $\C^2$.
	\end{theorem}
	\begin{remark}\label{re1}
		From Proposition \ref{cha} that was used in the proof of Theorem \ref{lowdim}, it is likely that there exists a parallel result similar to \cite[Proposition 3.10]{dps} for compact K\"{a}hler manifolds with nonnegative holomorphic sectional curvature. If this is the case, then Problem \ref{stru} could be resolved by considering Albanese maps as discussed in \cite[Section 3]{dps}. However, we currently do not have any information about the fundamental group of such manifolds. Therefore, from this perspective, it might be more necessary and direct to consider Problem \ref{sp}.
	\end{remark}

	Finally, we consider the Hermitian case. Yang (\cite[Corollary 1.10]{yang2018}) proved that a compact K\"{a}hler surface with a Hermitian metric of positive holomorphic sectional curvature is projective and rationally connected. It remains difficult to consider the Hermitian case in dimension $\geq$ 3. In \cite{YangZheng}, Yang and Zheng introduced the concept of real bisectional curvature and generalized Wu-Yau's Theorem (\cite{HLW16,wy16a,wy16b,dt19}) to the Hermitian case. This concept is also referred as "positive curvature operator" by Lee and Streets (\cite{ls2021}). Additionally, Yang and Zheng proposed a generalization of Yau's Conjecture \ref{yau} for the Hermitian case (\cite[Conjecture 1.6 (b)]{YangZheng}).
	
	\begin{conjecture}\label{c3}
		Let $(M,h)$ be a Hermitian manifold with positive real bisectional curvature. Then M is a projective and rationally connected manifold.
	\end{conjecture}
	
	We say that $(M,h)$ has positive real bisectional curvature denoted by $B_h>0$ if for any $x\in M$, any unitary frame $e=\{e_1,\cdots,e_n\}$ at $x$ and any nonnegative constants $a=\{a_1,\cdots,a_n\}$ with $\vert a\vert^2=a_1^2+\cdots+a_n^2>0$, it holds that $$B_h(e,a)=\frac{1}{\vert a\vert^2}\sum\limits_{i,j=1}^nR_{i\bar{i}j\bar{j}}a_ia_j>0,$$
	where $R$ is the Chern curvature tensor of $h$. This curvature condition is equivalent to positive holomorphic sectional curvature when the metric $h$ is K\"{a}hler, and it's slightly stronger in the non-K\"{a}hler case (cf. \cite{YangZheng}). Under the entra assumption that $M$ is K\"{a}hlerian, Conjcture \ref{c3} is just a weaker version of Conjecture \ref{c}. In this special case, Tang (\cite[Theorem 1.4]{tang2019}) confirmed the Conjecture \ref{c3} for $\dim M=3$ and proved that $M$ is projective for $\dim M\geq1$.  Applying the Bochner-type formula used in the proof of Proposition \ref{boch}, we confirm Conjecture \ref{c3} under the additional assumption that $M$ is K\"{a}hlerian.
	
	\begin{theorem}\label{t3}
		Let $(M,h)$ be a compact Hermitian manifold with positive real bisectional curvature. Then for any holomorphic vector bundle $E$ on $M$, there exists a constant $C_E>0$ such that
		$$H^0(M,((T^{1,0}M)^*)^{\otimes p}\otimes E^{\otimes l})=0\ \ \text{for all $p,l\in \N_+$ with $p\geq C_El$.}$$
		In particular, if $M$ is K\"{a}hlerian, $M$ is a projective and rationally connected manifold.
	\end{theorem}
	
	\medskip
	
	This paper is organized as follows. In Section \ref{pre}, we recall some basic definitions and facts. In Section \ref{s3}, we establish an integral inequality to show Theorem \ref{t2} and provides the proofs of Theorem \ref{quasi} and Theorem \ref{sc}. In section \ref{23}, we present a proof of Theorem \ref{lowdim}. In Section \ref{rbc}, we give a proof of Theorem \ref{t3}. In the appendix, we provide a proof of $L^2$-vanishing theorem.

	\section{Preliminaries}\label{pre}
	For the reader's convenience, we gather some basic definitions and useful facts relevant to the study of rational connectedness and holomorphic sectional curvature, which are well-known to experts and will be referenced later in the text

	\subsection{Pseudo-effective line bundle and plurisubharmonic function} We first introduce the concept of pseudo-effective line bundle (see e.g. \cite{de93,de10}).
	\begin{definition}
		A singular Hermitian metric $h$ on a line bundle $L$ is a metric which is given in any trivialization $\theta: F|_{\Omega}\rightarrow\Omega\times \C$ by
		$$\|\xi\|_h=|\theta(\xi)|e^{-\varphi(x)},\ x\in\Omega,\xi\in L_x$$
		where $\varphi\in L^1_{loc}(\Omega)$, called the weight of the metric with respect to the trivialization $\theta$.
	\end{definition}
	\begin{definition}
		Let $L$ be a line bundle on a compact complex manifold $X$. We say that $L$ is pseudo-effective if $L$ can be equipped with a singular Hermitian metric $h$ with $T=\frac{\im}{2\pi}\Theta_{L,h}=\im\pp\varphi\geq0$ as a current, equivalently, the weight $\varphi\in Psh(\Omega)\cap L_{loc}^1(\Omega)$ in any trivalization.
	\end{definition}
	
	Let $\Omega$ be an open subset of $\C^n$. We have the following approxiamation properties for plurisubharmonic functions (see e.g. \cite[Chapter 1 and Chapter 3]{demaillycomplex}).
	
	\begin{lemma}\label{appro}
		Let $u\in Psh(\Omega)\cap L^1_{loc}(\Omega)$. If $(\rho_\epsilon)$ is a family of smoothing kernels, then $u*\rho_\epsilon\in Psh(\Omega_\epsilon)\cap C^\infty(\Omega_\epsilon)$, the family $(u*\rho_\epsilon)$ is decreasing as $\epsilon\rightarrow0$ and $\lim_{\epsilon\rightarrow0}u*\rho_\epsilon=u,$ where $\Omega_\epsilon=\{x\in \Omega:d(x,\p\Omega)>\epsilon\}$.
	\end{lemma}
	
	\begin{lemma}({\cite[Theorem 2.1]{bt82}})\label{bt82}
		Let $\{v_j^1\},\cdots,\{v_j^k\}$ be a decreasing sequences of functions in $Psh(\Omega)\cap L^\infty_{loc}(\Omega)$ and assume that for all $z\in \Omega$,
		$$\lim\limits_{j\rightarrow\infty}v_j^i=v^i\in Psh(\Omega)\cap L^\infty_{loc} (\Omega),\ 1\leq i\leq k.$$
		Then
		$$\lim\limits_{j\rightarrow\infty}v_j^1dd^cv_j^2\wedge\cdots\wedge dd^cv_j^k=v^1dd^cv^2\wedge\cdots dd^cv^k,$$
		where the limits is in the weak topology on the currents of bigdegree $(k,k)$ on of order zero.
	\end{lemma}
	
	\subsection{Some criterions for rational connectedness} Based on the uniruledness criterion estabished in (\cite{BDPP}), Campana, Demailly and Peternell devoloped the following criterion for rational connectedness.
	\begin{lemma}[{\cite[Theorem 1.1]{campana2015}}]\label{cdp}
		Let $M$ be a projective manifold. The following properties are equivalent.
		\begin{enumerate}
			\item[(a)] $M$ is rationally connected.
			\item[(b)] For every invertible subsheaf $\mathcal{F} \subset \mathcal{O}(\Lambda^{p,0} M)$, $1 \leq p \leq n$, $\mathcal{F}$ is not pseudo-effective.
			\item[(c)] For any ample line bundle $A$ on $X$, there exists a constant $C_A > 0$ such that
			\[
			H^0(M, (T^{1,0}M^*)^{\otimes m} \otimes A^{\otimes k}) = 0
			\]
			for all $m, k \in \mathbb{N}^*$ with $m \geq C_A k$.
		\end{enumerate}
	\end{lemma}
	
	\subsection{Some properties of curvature tensor} Let us recall some algebraic properties about curvature tensor.
	\begin{lemma}(Royden's observation, {\cite[P552]{royden1980ahlfors}})\label{royden}
		Let $\xi_1,\cdots,\xi_k$ be $k$ orthonormal tangent vectors. If $S(\xi,\bar{\eta},\zeta,\bar{\w})$ is a symmetric bihermitian form on some complex vector space $V$ (i.e., $S(\xi,\bar{\eta},\zeta,\w)=S(\zeta,\bar{\eta},\xi,\bar{\w})$ and $S(\xi,\bar{\w},\zeta,\bar{\eta})$), such that for all $\xi\in V$, we have
		$$S(\xi,\bar{\xi},\xi,\bar{\xi})\leq K|\xi|^4,$$
		for some constant $K$, then
		$$\sum\limits_{\alpha,\beta}S(\xi_\alpha,\bar{\xi}_\alpha,\xi_\beta,\bar{\xi}_\beta)\leq \frac{1}{2}K[\sum\limits|\xi_\alpha|^4+(\sum\limits|\xi_\alpha|^2)^2].$$
	\end{lemma}
	
	\begin{lemma}({\cite[Lemma 1.6]{yang2020}})\label{L2}
		Let $(M,\w)$ be a compact K\"{a}hler manifold and $x_0$ be an arbitrary point on $M$. Let $e_1\in T_{x_0}^{1,0}M$ be a unit vector which minimizes the holomorphic sectional curvature $H$ of $\w$ at $x_0$, then
		\begin{equation}
			2R(e_1,\overline{e_1},W,\overline{W})\geq(1+\vert\langle W,\overline{e_1}\rangle_g\vert^2)R(e_1,\overline{e_1},e_1,\overline{e_1})
		\end{equation}
		for any unit vector $W\in T_{x_0}^{1,0}M$.
	\end{lemma}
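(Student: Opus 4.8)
The plan is to prove this by a second-variation (Royden-type) argument that exploits the fact that $H$ is invariant under complex rescaling $V\mapsto \lambda V$, $\lambda\in\mathbb{C}^*$. Consequently the unit minimizer $e_1$ is actually a global minimum of $H$ on all of $T_{x_0}^{1,0}M\setminus\{0\}$, so I can probe it by an \emph{unconstrained} complex variation instead of working on the unit sphere. Concretely, I would fix the unit vector $W$, set $V(\zeta)=e_1+\zeta W$ for $\zeta\in\mathbb{C}$, and abbreviate $\ka:=H(e_1)=R(e_1,\overline{e_1},e_1,\overline{e_1})$ and $a:=\langle W,\overline{e_1}\rangle_g$. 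Defining
$$\phi(\zeta):=R(V,\overline{V},V,\overline{V})-\ka\,|V|_g^4,$$
the minimality of $\ka$ gives $R(V,\overline V,V,\overline V)=H(V)|V|_g^4\ge \ka|V|_g^4$ whenever $V(\zeta)\neq 0$, so $\phi\ge 0$ everywhere with $\phi(0)=0$; thus $\zeta=0$ is a global minimum.

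The next step is to expand $\phi$ as an exact polynomial $\phi(\zeta)=\sum_{0\le a,b\le 2}c_{ab}\zeta^a\bar\zeta^b$, using multilinearity of $R$ together with $|V|_g^2=1+\zeta a+\bar\zeta\bar a+|\zeta|^2$ (here $|e_1|_g=|W|_g=1$). Vanishing of $\phi$ and its gradient at $0$ forces $c_{00}=c_{10}=c_{01}=0$. The heart of the computation is isolating the diagonal coefficient $c_{11}$. From $R(V,\overline V,V,\overline V)$ the four cross terms, in which $W$ occupies one of the two holomorphic slots and $\overline W$ one of the two anti-holomorphic slots, all collapse—via the K\"ahler symmetries $R_{i\bar jk\bar l}=R_{k\bar ji\bar l}=R_{i\bar lk\bar j}$—to $R(e_1,\overline{e_1},W,\overline{W})$, contributing $4R(e_1,\overline{e_1},W,\overline{W})$; from $\ka|V|_g^4$ the coefficient of $|\zeta|^2$ is $\ka(2+2|a|^2)$. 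Hence
$$c_{11}=4R(e_1,\overline{e_1},W,\overline{W})-2\ka\,(1+|a|^2).$$

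Finally I must extract the sign of $c_{11}$ from $\phi\ge 0$, and this is the step I expect to be the main obstacle: a naive real second derivative at $0$ mixes in the off-diagonal terms $c_{20}\zeta^2+c_{02}\bar\zeta^2$, so one cannot conclude $c_{11}\ge 0$ directly. I would resolve this by averaging over the phase: writing $\zeta=re^{i\theta}$,
$$\frac{1}{2\pi}\int_0^{2\pi}\phi(re^{i\theta})\,d\theta=c_{11}r^2+c_{22}r^4\ge 0,$$
since every monomial with $a\neq b$ integrates to zero; dividing by $r^2$ and letting $r\to 0$ gives $c_{11}\ge 0$. Substituting the formula for $c_{11}$ and dividing by $2$ yields
$$2R(e_1,\overline{e_1},W,\overline{W})\ge \ka\,(1+|a|^2)=\bigl(1+|\langle W,\overline{e_1}\rangle_g|^2\bigr)R(e_1,\overline{e_1},e_1,\overline{e_1}),$$
which is exactly the asserted inequality.
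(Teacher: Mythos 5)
Your argument is correct and complete: the scale-invariance of $H$ upgrades the unit-sphere minimizer to a global one, the coefficient $c_{11}=4R(e_1,\overline{e_1},W,\overline{W})-2\ka(1+|a|^2)$ is computed correctly from the K\"ahler symmetries, and the phase-averaging step legitimately isolates $c_{11}\ge 0$ from the off-diagonal terms. The paper itself gives no proof of this lemma --- it is quoted directly from \cite[Lemma 1.6]{yang2020} --- and your second-variation-plus-averaging argument is essentially the standard proof given there (going back to Royden's and Berger's averaging arguments), so there is nothing to flag.
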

	
	\begin{lemma}({\cite[Lemma 2.1]{heier2018}})\label{L3}
		Let $(M,\w)$ be a compact K\"{a}hler manifold and $x_0$ be an arbitrary point on $M$. Assume that the holomorphic sectional curvature $H$ of $\w$ is nonnegative at $x_0$. If a nonzero tangent vector $V\in T^{1,0}_{x_0}M$ such that $R(V,\overline{V},W,\overline{W})=0$ for any $W\in T^{1,0}_{x_0}(M)$, then
		\begin{equation}
			R(V,\overline{X},Y,\overline{Z})=0
		\end{equation}
		for any $X,Y,Z\in T_{x_0}^{1,0}M$, i.e. $V$ is a nonzero truly flat tangent vector at $x_0$.
	\end{lemma}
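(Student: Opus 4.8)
The plan is to exploit the full symmetry of the K\"ahler curvature tensor at the single point $x_0$ together with the nonnegativity of $H$, and to extract the vanishing of every component $R(V,\bar X,Y,\bar Z)$ through a sequence of polarizations. Throughout I would work in the fixed vector space $T^{1,0}_{x_0}M$ and use only the algebraic identities $R(A,\bar B,C,\bar D)=R(C,\bar B,A,\bar D)=R(A,\bar D,C,\bar B)$ together with the conjugation symmetry $\overline{R(A,\bar B,C,\bar D)}=R(B,\bar A,D,\bar C)$.

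First I would upgrade the hypothesis. The map $W\mapsto R(V,\bar V,W,\bar W)$ is a Hermitian form, since it is linear in $W$, conjugate-linear in $\bar W$, and satisfies $\overline{R(V,\bar V,W_1,\bar W_2)}=R(V,\bar V,W_2,\bar W_1)$; a Hermitian form vanishing on the diagonal vanishes identically, so $R(V,\bar V,X,\bar Y)=0$ for all $X,Y$. This annihilates every term built from the block $R(V,\bar V,\cdot,\bar\cdot)$, which is what makes the next computation tractable.

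The core step is to test the nonnegativity of $H$ along the family $V+tW$, $t\in\C$. Since $R(V+tW,\overline{V+tW},V+tW,\overline{V+tW})=H(V+tW)\,|V+tW|^4\ge 0$, I would expand in powers of $t$ and $\bar t$ and discard all terms containing the vanishing block, leaving $f(t):=2\,\mathrm{Re}\!\big[t^2\,R(W,\bar V,W,\bar V)\big]+4\,\mathrm{Re}\!\big[t^2\bar t\,R(W,\bar W,W,\bar V)\big]+|t|^4 H(W)|W|^4\ge 0$. The homogeneity-$2$ part $2\,\mathrm{Re}[t^2A]$ with $A=R(W,\bar V,W,\bar V)$ is sign-indefinite in $\arg t$ unless $A=0$, and for $|t|$ small it dominates the higher-order terms, so nonnegativity forces $A\equiv 0$. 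With $A=0$ the new leading part is the homogeneity-$3$ term $4\,\mathrm{Re}[t^2\bar t\,B]$ with $B=R(W,\bar W,W,\bar V)$, which is again sign-indefinite unless $B=0$; the same small-$t$ argument then gives $B\equiv 0$ for every $W$.

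Finally I would convert $B\equiv 0$ into the stated conclusion by two more polarizations. Substituting $W=W_1+\zeta W_2$ into $R(W,\bar W,W,\bar V)=0$ and separating the four distinct characters $\zeta^2,\zeta,|\zeta|^2,\bar\zeta$ isolates the $|\zeta|^2$-coefficient $R(W_1,\bar W_2,W_2,\bar V)=0$ for all $W_1,W_2$; by conjugation and the curvature symmetries this reads $R(V,\bar X,Y,\bar Y)=0$ for all $X,Y$. For fixed $X$ the map $(Y_1,Y_2)\mapsto R(V,\bar X,Y_1,\bar Y_2)$ is sesquilinear, so the complex polarization identity $S(Y_1,Y_2)=\tfrac14\sum_{k=0}^3 i^k S(Y_1+i^kY_2,Y_1+i^kY_2)$ upgrades its diagonal vanishing to $R(V,\bar X,Y,\bar Z)=0$ for all $X,Y,Z$, which is exactly truly-flatness. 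I expect the main obstacle to be the middle step: organizing the $t$-expansion so that the surviving terms group correctly by homogeneity, and rigorously justifying that a nonzero lowest-order term of indefinite sign contradicts $H\ge 0$ as $|t|\to 0$. The careful bookkeeping of the tensor symmetries when reducing $B\equiv 0$ to a single-$V$ component is the other place demanding attention.
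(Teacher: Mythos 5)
Your argument is correct and complete. Note that the paper itself gives no proof of this lemma: it is quoted verbatim from Heier--Lu--Wong--Zheng \cite[Lemma 2.1]{heier2018}, and your proof is essentially the standard one from that source --- first upgrading the hypothesis to $R(V,\bar{V},\cdot,\bar{\cdot})\equiv 0$ by sesquilinear polarization, then expanding $R(V+tW,\overline{V+tW},V+tW,\overline{V+tW})\ge 0$ in $t,\bar{t}$ and killing the lowest-order indefinite terms $R(W,\bar{V},W,\bar{V})$ and $R(W,\bar{W},W,\bar{V})$ as $|t|\to 0$, and finally polarizing again; each of these steps checks out, including the fact that complex polarization needs no Hermitian symmetry.
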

	
	\subsection{The structure theorem of morphism} The following theorem reveals a
	detailed structure of morphisms whose domain has semi-positive holomorphic sectional
	curvature.
	\begin{lemma}\cite[Theorem 1.6]{matsumura2022}\label{morp}
		Let \((X, g)\) be a compact K\"ahler manifold with semi-positive holomorphic sectional curvature, and let \( \varphi: X \to Y \) be a surjective morphism to a compact K\"ahler manifold \( Y \) with a pseudo-effective canonical bundle. Then
		\begin{enumerate}
			\item The following statements hold:
			\begin{enumerate}[\textbullet]
				\item The morphism \( \varphi: X \to Y \) is smooth and locally trivial.
				\item The exact sequence of vector bundles
				$
				0 \to T_{X/Y} \to TX \xrightarrow{d\varphi^*} \varphi^* TY \to 0
				$
				admits a holomorphic orthogonal splitting, that is, the orthogonal complement \( T_{X/Y}^\perp \) is a holomorphic vector bundle and there exists an isomorphism \( j: \varphi^* TY \to T_{X/Y}^\perp \) such that it gives the holomorphic orthogonal decomposition
				\[
				TX = T_{X/Y} \oplus j(\varphi^* TY) \cong T_{X/Y} \oplus \varphi^* TY.
				\]
				\item The image \( Y \) admits a K\"ahler metric \( g_Y \) such that \( g_Q = \varphi^* g_Y \) and the holomorphic sectional curvature of \( g_Y \) is identically zero. Here \( g_Q \) is the Hermitian metric on \( \varphi^* TY \) induced by the above exact sequence and the metric \( g \).
			\end{enumerate}
			
			\item Let \( F \) be a fiber of \( \varphi\) and \( F_{\text{univ}} \) be its universal cover. We obtain the isomorphism
			\[
			X_{\text{univ}} \cong \mathbb{C}^m \times F_{\text{univ}},
			\]
			where \( m \) is the dimension of \( Y \). There exists a K\"ahler metric \( g_{F_{\text{univ}}} \) on \( F_{\text{univ}} \) such that the holomorphic sectional curvature of \( g_{F_{\text{univ}}} \) is semi-positive and the above isomorphism \( X_{\text{univ}} \cong \mathbb{C}^m \times F_{\text{univ}} \) is isometric with respect to the K\"ahler metrics induced by \( g \), \( g_Y \), and \( g_{F_{\text{univ}}} \).
		\end{enumerate}
	\end{lemma}

	\section{Proof of Theorem \ref{t2}, Theorem \ref{quasi} and Theorem \ref{sc}}\label{s3}
	In this section, we provide the proofs of Theorem \ref{t2}, Theorem \ref{quasi} and Theorem \ref{sc}. First, we present a proof of Theorem \ref{t2}. The proof of Theorem \ref{t2} is divided into two parts: establishing an integral inequality and performing a pointwise computation. The method is based on integration by parts, Bochner-type formula and algebraic properties of holomorphic sectional curvature.
	\subsection*{Setting.}\label{set}
	Let $(M,\w_g)$ be a compact K\"ahler manifold. Assume that $\mathcal{F}$ is an invertible sheaf of $\mathcal{O}(\Lambda^{p,0}M)$ which is pseudo-effective as a line bundle. Then there exist a finite covering $\{U_\alpha\}$ consists of coordinate neighborhoods and local holomorphic $(p,0)$-forms $\eta_\alpha$ generating $\mathcal{F}|_{U_\alpha}$, $\mathcal{F}$ is associated with the \u{C}ech cocycle $b_{\alpha\beta}$ in $\mathcal{O}_M^*(U_\alpha)$ such that $\eta_\beta = b_{\alpha\beta} \eta_\alpha$ and there exists a singular Hermitian metric $h=e^{-\varphi}$ of $\mathcal{F}$ defined by a collection of plurisubharmonic functions $\varphi_\alpha \in \mathrm{PSH}(U_\alpha)\cap L_{loc}^1(U_\alpha)$ such that $e^{-\varphi_\beta} = |b_{\alpha\beta}|^2 e^{-\varphi_\alpha}$. We can define a globally bounded measurable function
	$
	\psi = e^{\varphi_\alpha} |\eta_\alpha|^2_g
	$
	as in (\cite{campana2015}), which can be viewed as the hermitian metric ratio $\frac{|\eta_\alpha|^2_g}{|\eta_\alpha|^2_h}$. Since $\im\pp$ of the plurisubharmonic function $\varphi_\alpha$ is a closed positive current, We have $$\im\pp(e^{\varphi_\alpha})=e^{\varphi_\alpha}(\im\p\varphi_\alpha\wedge\bp\varphi_\alpha+\im\pp\varphi_\alpha),$$ which is also a closed positive current on $U_\alpha$. This shows that $\p\varphi_{\alpha}$ is $L^2$ with respect to the wight $e^{\varphi_\alpha}$ and $e^{\varphi_\alpha}\pp\varphi_\alpha$ has locally finite measure corfficients on $U_\alpha$. Hence
	$$\p\psi=e^{\varphi_\alpha}(\p\varphi_\alpha\vert\eta_\alpha\vert^2_g+\p\vert\eta_\alpha\vert^2_g)$$
	is $L^2$-integrable on $M$. We will derive the following integral inequality for $\psi$, which is the critical point of the proof of Theorem \ref{t2}.
	
	\begin{proposition}\label{boch}
		The setting is as above, then
		\begin{equation}\label{ine}
			-\int_M|\p\psi|^2_g\cdot\w^n\geq\frac{p^2}{2}\int_M\kappa\psi^2\cdot\w^n
		\end{equation}
		where $\kappa$ is a continuous function such that $\kappa(x)$ which is a lower bound of the holomorphic sectional curvature of $\w$ at $x$.
	\end{proposition}
	
	\subsection{Proof of the integral inequality \eqref{ine}}
	This subsection is devoted to the proof of Proposition \ref{boch}. Firstly, we will show the following key identity for forms.
	\begin{lemma}\label{L1}
		Let $(M, g)$ be a Hermitian manifold of complex dimension $n$, $\alpha$ be a real (1,1)-form  and $\eta$ be a (p,0)-form on $M$. We can define a real semi-positive $(1,1)$-form $\beta$ associated to $\eta$ by
		\begin{equation}
			\beta=\duallefs^{p-1}\left((\sqrt{-1})^{p^2}\frac{\eta\wedge\bar{\eta}}{p!}\right),
		\end{equation}
		where $\duallefs$ is the dual Lefschetz operator with respect to $\w$. Then
		\begin{equation}\label{tr1}
			\tr_\w\beta=\vert \eta\vert^2_g
		\end{equation}
		and
		\begin{equation}\label{wedge}
			\begin{split}
				(\im)^{p^2}\alpha\wedge \eta\wedge\bar{\eta}\wedge \frac{\w^{n-p-1}}{(n-p-1)!}=\left[\tr_\w\alpha\cdot\vert \eta\vert_g^2-p\langle \alpha,\beta\rangle_g\right]\frac{\w^n}{n!},
			\end{split}
		\end{equation}
		where $\w$ is the fundamental (1,1)-form associated to the Hermitian metric $g$.
	\end{lemma}
	
	\begin{proof}
		In local holomorphic coordinates $\{z^{1}, \cdots , z^{n}\}$, we have $\alpha=\im\alpha_{j\bar{k}}dz^{j}\wedge d\bar{z}^k$, $\w=\im g_{j\bar{k}}dz^j\wedge d\bar{z}^k$ and $\eta=\eta_{I_p}dz^{I_p}$, where $I_p=(i_1,\cdots,i_p)\in \N_+^p$ and $ dz^{I_p}=dz^{i_1}\wedge\cdots\wedge dz^{i_p}$.
		For any $x_0\in M$, we can choose local holomorphic coordinates centered at $x_{0}$ such that
		$$\w=\im\sum\limits_idz^i\wedge d\bar{z}^i$$
		and
		$$\alpha=\im\sum\limits_i\alpha_{i\bar{i}}dz^i\wedge d\bar{z}^i$$
		at $x_0$.	So, we have
		\begin{align*}
			\w^{n-p-1}=(\im)^{n-p-1}(n-p-1)!\sum\limits_{i_1<i_2<\cdots<i_{n-p-1}}dz^{i_1}\wedge d\bar{z}^{i_1}\wedge\cdots\wedge dz^{i_{n-p-1}}\wedge d\bar{z}^{i_{n-p-1}}
		\end{align*}
		and
		\begin{align*}
			(\im)^{p^2}\alpha\wedge \frac{\eta\wedge\bar{\eta}}{p!}
			=&(\im)^{p+1}\sum\limits_{J_p}\sum\limits_{i\notin J_p}\alpha_{i\bar{i}}\eta_{J_p}\overline{\eta_{J_p}}dz^i\wedge d\bar{z}^{i}\wedge dz^{j_1}\wedge d\bar{z}^{j_1}\wedge\cdots\wedge dz^{j_p}\wedge d\bar{z}^{j_p}\\
			&+(\text{terms involving $\eta_{J_p}\overline{\eta_{K_p}}$ with $J_p\neq \pi(K_p)$ for some permutation $\pi$}).
		\end{align*}
		It follows that
		\begin{equation}
			\begin{split}
				&(\im)^{p^2}\frac{n!}{(n-p-1)!}\alpha\wedge \frac{\eta\wedge\bar{\eta}}{p!}\wedge \w^{n-p-1}\\=&(\im)^nn!\sum\limits_{J_p}\sum\limits_{i\notin J_p}\alpha_{i\bar{i}}\eta_{J_p}\overline{\eta_{J_p}}dz^1\wedge d\bar{z}^1\wedge\cdots\wedge dz^n\wedge d\bar{z}^n\\
				=&\left(\sum\limits_{J_p}\sum\limits_{i\notin J_p}\alpha_{i\bar{i}}\eta_{J_p}\overline{\eta_{J_p}}\right)\w^n\\
				=&\left(\sum\limits_{i,J_p}\alpha_{i\bar{i}}\eta_{J_p}\overline{\eta_{J_p}}-\sum\limits_{i\in J_p}\alpha_{i\bar{i}}\eta_{J_p}\overline{\eta_{J_p}} \right)\w^n\\
				=&\left(\sum\limits_{i,J_p}\alpha_{i\bar{i}}\eta_{J_p}\overline{\eta_{J_p}}-p\sum\limits_{i,I_{p-1}}\alpha_{i\bar{i}}\eta_{iI_{p-1}}\overline{\eta_{iI_{p-1}}} \right)\w^n\\
				=&\left[\frac{\tr_\w\alpha\cdot\vert \eta\vert_g^2}{p!}-\langle \alpha\wedge\w^{p-1},(\im)^{p^2}\frac{\eta\wedge\bar{\eta}}{p!(p-1)!}\rangle_g\right]\w^n\\
				=&\left[\frac{\tr_\w\alpha\cdot\vert \eta\vert^2_g}{p!}-\frac{1}{(p-1)!}\langle \alpha,\duallefs^{p-1}\left((\sqrt{-1})^{p^2}\frac{\eta\wedge\bar{\eta}}{p!}\right)\rangle_g\right]\w^n\\
				=&\left[\frac{\tr_\w\alpha\cdot\vert \eta\vert^2_g}{p!}-\frac{1}{(p-1)!}\langle \alpha,\beta\rangle_g\right]\w^n.
			\end{split}
		\end{equation}
		This verifies \eqref{wedge}. Locally, $\beta=\sqrt{-1}\beta_{i\bar{j}}dz^i\wedge d\bar{z}^j$ and
		\begin{equation}
			\beta_{i\bar{j}}=p!\sum\limits_{I_{p-1}}\eta_{iI_{p-1}}\overline{\eta_{jI_{p-1}}}.
		\end{equation}
		We can reselect the local holomorphic coordiates such that $(g_{i\bar{j}})$ is still the identity and $(\beta_{i\bar{j}})$ is diagonal at $x_0$. Then
		\begin{equation}\label{beta}
			\beta_{i\bar{i}}=p!\sum\limits_{I_{p-1}}\eta_{iI_{p-1}}\overline{\eta_{iI_{p-1}}}\geq0.
		\end{equation}
		Hence $\beta$ is semi-positive and
		\begin{equation}
			\tr_\w\beta=p!\sum\limits_{i,I_{p-1}}\eta_{iI_{p-1}}\overline{\eta_{iI_{p-1}}}=\vert \eta\vert^2_g.
		\end{equation}
		The proof is completed.
	\end{proof}

	\begin{proof}[\bf Proof of Proposition \ref{boch}] Now let us proceed to show Proposition \ref{boch}. It follows from Lemma \ref{L1} that
		\begin{equation}\label{01}
			\begin{split}
				n&\psi\im\pp\psi\wedge\w^{n-1}=e^{\varphi_\alpha}\tr_\w(\im\pp\psi)|\eta_\alpha|^2_g\cdot\w^n\\
				=&p\langle \im\pp \psi,e^{\varphi_\alpha}\beta_\alpha\rangle_g\cdot\w^n+(\im)^{p^2}\frac{n!}{(n-p-1)!}\im \pp \psi\wedge e^{\varphi_\alpha} \eta_\alpha\wedge\overline{\eta_\alpha}\wedge\w^{n-p-1}
			\end{split}
		\end{equation}
		where $\beta_\alpha=\Lambda^{p-1}((\im)^{p^2}\frac{\eta_\alpha\wedge\overline{\eta_\alpha}}{p!})$. $e^{\varphi_\alpha}\beta_{\alpha}$ and $e^{\varphi_\alpha}\eta_{\alpha}\wedge\overline{\eta_\alpha}$ are globally defined differential forms with bounded measurable cofficients. For clarity, the arguement will be divided into three steps.
		
		\subsection*{Step 1. The integration by parts} Although $\varphi_\alpha$ is in general singular, we can still perform integration by parts as in \cite{de02}. For the reader's convenience, we provide a detailed explaination.
		
		Consider a partitions of unity $\{\rho_\alpha\}$ subordinated to the finite cover $\{U_\alpha\}$. Since $\supp\rho_\alpha\subset\subset U_\alpha$, for any $\epsilon>0$ sufficiently small, there exists a sequence of smooth plurisubharmonic functions $\varphi_{\alpha,\epsilon}$ decreasing to $\varphi_{\alpha}$ on some open subset $V_\alpha$ such that $\supp\rho_\alpha\subset\subset V_\alpha\subset\subset U_\alpha$. Applying the results of Bedford-Taylor (Lemma \ref{bt82}) to the uniformly bounded function $e^{\varphi_{\alpha,\epsilon}}$ on $V_\alpha$, we obtain local weak covergence $$e^{\varphi_{\alpha,\epsilon}}\pp\varphi_{\alpha,\epsilon}\rightarrow e^{\varphi_\alpha}\pp\varphi_\alpha,\ e^{\varphi_{\alpha,\epsilon}}\rightarrow e^{\varphi_\alpha}\ \text{and}\  e^{\varphi_{\alpha,\epsilon}}\p\varphi_{\alpha,\epsilon}\wedge\bp\varphi_{\alpha,\epsilon}\rightarrow e^{\varphi_{\alpha}}\p\varphi_{\alpha}\wedge\bp\varphi_{\alpha}$$ on $V_\alpha$. Thus
		\begin{equation}\label{idp1}
			\begin{split}
				&-\int_M\left|\p\psi\right|^2_g\cdot\w^n=-n\int_M\p\psi\wedge\bp\psi\wedge\sum\limits_\alpha\rho_\alpha\w^{n-1}\\
				=&-n\sum\limits_\alpha\int_{V_\alpha}\im\p(e^{\varphi_\alpha}|\eta_\alpha|_g^2)\wedge\bp(e^{\varphi_\alpha}|\eta_\alpha|_g^2)\wedge\rho_\alpha\w^{n-1}\\
				=&-n\sum\limits_\alpha\lim\limits_{\epsilon\rightarrow0}\int_{V_\alpha}\im\p(e^{\varphi_\alpha,\epsilon}|\eta_\alpha|_g^2)\wedge\bp(e^{\varphi_{\alpha,\epsilon}}|\eta_\alpha|_g^2)\wedge\rho_\alpha\w^{n-1}\\
				=&n\sum\limits_\alpha\lim\limits_{\epsilon\rightarrow0}\int_{V_\alpha}e^{\varphi_\alpha,\epsilon}|\eta_\alpha|_g^2\left[\im\pp(e^{\varphi_{\alpha,\epsilon}}|\eta_\alpha|_g^2)\wedge\rho_\alpha\w^{n-1}+\p\rho_\alpha\wedge\im\bp(e^{\varphi_{\alpha,\epsilon}}|\eta_\alpha|_g^2)\wedge\w^{n-1}\right]\\
				=&n\sum\limits_\alpha\int_{V_\alpha}e^{\varphi_\alpha}|\eta_\alpha|_g^2\left[\im\pp(e^{\varphi_{\alpha}}|\eta_\alpha|_g^2)\wedge\rho_\alpha\w^{n-1}+\p\rho_\alpha\wedge\im\bp(e^{\varphi_{\alpha}}|\eta_\alpha|_g^2)\wedge\w^{n-1}\right]\\
				=&n\int_M\psi\im\pp\psi\wedge\w^{n-1}.
			\end{split}
		\end{equation}
		The last equality follows from the fact $\sum\limits_\alpha\rho_\alpha=1$. Combining \eqref{idp1} with \eqref{01}, we obtain
		\begin{equation}\label{ibp}
			\begin{split}
				0\geq&-\int_M|\p\psi|_g^2\cdot\w^n\\
				=&p\int_M\langle \im\pp \psi,e^{\varphi_\alpha}\beta_\alpha\rangle_g\cdot\w^n+(\im)^{p^2}\frac{n!}{(n-p-1)!}\int_M\im \pp \psi\wedge e^{\varphi_\alpha} \eta_\alpha\wedge\overline{\eta_\alpha}\wedge\w^{n-p-1}.
			\end{split}
		\end{equation}
		Next, we analyze each term on the right-hand side of integral inequality \eqref{ibp} seperately. First, we consider the sign of first term in \eqref{ibp}.
		
		\subsection*{Step 2. The Bochner formula.}  For any $x_0\in U_\alpha$, choose local holomorphic coordinates centered at $x_0$ such that
		$$\w=\im\sum\limits_{i}dz^i\wedge d\bar{z}^i$$
		and
		$$\beta_\alpha=\sqrt{-1}\sum\limits_i\beta_{i\bar{i}}dz^i\wedge d\bar{z}^i$$
		at $x_0$. For simplicity, we omit the index $\alpha$ in the following calculation. We write $\eta=\sum\limits \eta_{I_p}dz^{I_p}$. Recall \eqref{tr1} and \eqref{beta}, which give us
		\begin{equation}
			\tr_\w\beta=\vert \eta\vert_g^2
		\end{equation}
		and
		\begin{equation}
			\beta_{i\bar{i}}=p!\sum\limits_{I_{p-1}}\eta_{iI_{p-1}}\overline{\eta_{iI_{p-1}}}\geq0.
		\end{equation}
		By direct calculation, we derive the following Bochner type formula in the sense of distribution,
		\begin{equation}\label{Bochner}
			\begin{split}
				\p_u\p_{\bar{u}}(e^\varphi\vert \eta\vert^2_g)=&e^\varphi(\vert\eta\vert^2\p_u\p_{\bar{u}}\varphi+\vert D'_u \eta+\eta\p_u\varphi\vert_{\Lambda^{p,0}M}^2+p!\sum\limits_{I_p}\sum\limits_{k=1}^p\sum\limits_{l=1}^nR_{u\bar{u}i_k\bar{l}}\eta_{I_p}\overline{\eta_{i_1\cdots(l)_k\cdots i_p}})\\
				=&e^\varphi(\vert\eta\vert^2\p_u\p_{\bar{u}}\varphi+\vert D'_u \eta+\eta\p_u\varphi\vert_{\Lambda^{p,0}M}^2+p(p!)\sum\limits_{l,k=1}^nR_{u\bar{u}l\bar{k}}\sum\limits_{J_{p-1}}\eta_{lJ_{p-1}}\overline{\eta_{kJ_{p-1}}})\\
				=&e^\varphi(\vert\eta\vert^2\p_u\p_{\bar{u}}\varphi+\vert D'_u \eta+\eta\p_u\varphi\vert_{\Lambda^{p,0}M}^2+p\sum\limits_{l,k}R_{u\bar{u}l\bar{k}}\beta_{l\bar
					{k}})
			\end{split}
		\end{equation}
		for all $u\in T^{1,0}_{x_0}M$, where $R_{i\bar{j}k\bar{l}}=R(\p_i,\overline{\p_j},\p_k,\overline{\p_l})$ and $D'$ is the $(1,0)$-part of the connection $D$ of $\Lambda^{p,0}M$ inducced by Chern connection of $(T^{1,0}M,\w)$. The second equality follows from interchanging the order of summation. Hence
		\begin{equation}\label{02}
			\begin{split}
				&p\left\langle \im\pp\psi,e^\varphi\beta\right\rangle_g=e^{2\varphi}(p\vert\eta\vert^2\p_i\p_{\bar{i}}\varphi\beta_{i\bar{i}}+p\sum\limits_i \vert D'_i \eta+\eta\p_i\varphi\vert^2\beta_{i\bar{i}}+p^2\sum\limits_{i,k}R_{i\bar{i}k\bar{k}}\beta_{i\bar{i}}\beta_{k\bar{k}})\\
				=&e^{2\varphi}\left(p\psi\langle\im\pp\varphi,\beta\rangle+p\left<
				\im\langle D' \eta+\eta\p\varphi,\overline{D' \eta+\eta\p\varphi}\rangle_g,\beta\right>_g+p^2\langle \Rm,-\beta\otimes\beta\rangle_g\right).
			\end{split}
		\end{equation}
		It follows from Royden's observation(Lemma \ref{royden}) that
		\begin{equation}
			\begin{split}
				e^{2\varphi}\langle \Rm,-\beta\otimes\beta\rangle_g=e^{2\varphi}\sum\limits_{i,k}R_{i\bar{i}k\bar{k}}\beta_{i\bar{i}}\beta_{k\bar{k}}\geq& e^{2\varphi}\frac{\ka(x_0)}{2}\left[(\sum\limits_{i}\beta_{i\bar{i}})^2+\sum\limits_{i}\beta_{i\bar{i}}^2 \right]\\
				=&\frac{\ka(x_0)}{2}e^{2\varphi}\left(\vert\eta\vert_g^4+\vert\beta\vert_g^2\right)\\
				\geq& \frac{\kappa(x_0)}{2}\psi^2
			\end{split}
		\end{equation}
		at $x_0$. The fact that $\im\pp\varphi\geq0$ and $\beta\geq0$ implies that
		\begin{equation}
			e^{2\varphi}(p\psi\langle\pp\varphi,\beta\rangle+p\left<
			\im\langle D' \eta+\eta\p\varphi,\overline{D' \eta+\eta\p\varphi}\rangle_g,\beta\right>_g\geq0.
		\end{equation}
		Hence, we have
		$$p\int_M\left\langle \im\pp\psi,e^\varphi\beta\right\rangle_g\cdot\w^n=p\int_{M}\left\langle \im\pp\psi,e^\varphi\beta\right\rangle_g\cdot\w^n\geq\frac{p^2}{2}\int_{M}\kappa\psi^2\cdot\w^n.$$
		
		\subsection*{Step 3. The sign of the second term of \eqref{ibp}:} As
		\begin{align*}
			&\pp(e^{\varphi_\alpha}\eta_\alpha\wedge \overline{\eta_\alpha})=\p(e^{\varphi_\alpha}\bp\varphi_\alpha\wedge\eta_\alpha\wedge\overline{\eta_\alpha}+e^{\varphi_\alpha}(-1)^p\eta_\alpha\wedge\overline{\p\eta_\alpha})\\
			=&(\im)^{2p}e^{\varphi_\alpha}(\p\eta_\alpha+\p\varphi_\alpha\wedge\eta_\alpha)\wedge\overline{(\p\eta_\alpha+\p\varphi_\alpha\wedge\eta_\alpha)}+e^{\varphi_\alpha}\pp\varphi_\alpha\wedge\eta_\alpha\wedge\overline{\eta_\alpha},
		\end{align*}
		by applying the integration by parts which is valid as previously explained, we get
		\begin{align*}
			&(\im)^{p^2}\int_M\im \pp \psi\wedge e^{\varphi_\alpha} \eta_\alpha\wedge\bar{\eta}_\alpha\wedge\w^{n-p-1}\\
			=&(\im)^{p^2}\int_M\psi\im\pp(e^{\varphi_\alpha}\eta_\alpha\wedge\overline{\eta}_\alpha\wedge\w^{n-p-1})\\
			=&(\im)^{p^2}\int_M\psi\im\pp(e^{\varphi_\alpha}\eta_\alpha\wedge\overline{\eta}_\alpha)\wedge\w^{n-p-1}\\
			=&\int_M\psi (\im)^{(p+1)^2}e^{\varphi_\alpha}\gamma_\alpha\wedge\overline{\gamma_\alpha}\wedge\w^{n-p-1}+\int_M\psi\im\pp\varphi_\alpha\wedge e^{\varphi_\alpha}(\im)^{p^2}\eta_\alpha\wedge\bar{\eta}_\alpha\wedge\w^{n-p-1}\\
			\geq&0.
		\end{align*}
		where $\gamma_\alpha=\p\eta_\alpha+\p\varphi_\alpha\wedge\eta_\alpha$. The last inequality follows from the fact $$\im\pp\varphi_\alpha\geq0,\ (\im)^{(p+1)^2}\gamma_\alpha\wedge\overline{\gamma_\alpha}\geq0 \text{  and  } (\im)^{p^2}\eta_\alpha\wedge\bar{\eta}_\alpha\geq0.$$
		In summary, we get the desired integral inequality. The proof is completed.
	\end{proof}
	
	\subsection{The proof of Theorem \ref{t2}}\label{s3.3}
	In this subsection, we proceed to show Theorem \ref{t2}. The method is based on performing a pointwise computation.
	\begin{proof}[\bf Proof of Theorem \ref{t2}]
		Let $\mF$ be an invertible subsheaf of $\mO({\Lambda^{p,0}M})$ that is pseudo-effective. Since the holomorphic sectional curvature of $\w$ is nonnegative on $M$, we can choose $\kappa\equiv0$ in the setting \ref{set}. By Proposition \ref{boch}, we have
		$$-\int_M|\p\psi|_g^2\cdot\w^n\geq0.$$	
		Hence $d\psi=0$ almost everywhere on $M$, which implies that $\psi= C$ for some constant $C\geq0$ almost everywhere.
		We claim that $C>0$. Suppose not, we have $\psi=0$ almost everywhere on $U_\alpha$. Since $\varphi_\alpha\in Psh(U_\alpha)\cap L_{loc}^1(U_\alpha)$, $\{\varphi_\alpha=-\infty\}$ has zero Lebesgue measure. This fact	implies that $\eta_\alpha\equiv0$ on $U_\alpha$ due to the smoothness of $\eta_\alpha$, which contradicts the fact that $\eta_\alpha$ generates $\mF|_{U_\alpha}$. Hence, $\varphi_\alpha<+\infty$ and $e^{\varphi_\alpha}|\eta_\alpha|^2_g=C>0$ implies that $\vert\eta_\alpha\vert^2_g>0$ and $\varphi_\alpha=\log C-\log|\eta_\alpha|^2_g$ is a smooth function on $U_\alpha$. The following calculation will be done in the smooth sense.
		
		For any $\alpha$, take an arbitrary point $x_0$ of $U_\alpha$. For simplicity, we omit the index $\alpha$. Recall that we can define a real semi-positive $(1,1)$-form $\beta$ associated to $\eta$ by
		\begin{equation}
			\beta=\duallefs^{p-1}\left((\sqrt{-1})^{p^2}\frac{\eta\wedge \overline{\eta}}{p!}\right)
		\end{equation}
		such that
		\begin{equation}
			\tr_\w\beta=\vert \eta\vert^2_g>0.
		\end{equation}
		We choose local holomorphic coordinates centered at $x_0$ such that $(g_{i\bar{j}})$ is the identity and $(\beta_{i\bar{j}})$ is diagonal at $x_0$. Since $\beta_{i\bar{i}}$ are nonnegative for all $1\leq i\leq n$ and $\sum\limits_i\beta_{i\bar{i}}>0$, without loss of generality, we may assume that
		\begin{equation}
			\beta_{1\bar{1}}\geq\beta_{2\bar{2}}\geq\cdots\geq\beta_{m\bar{m}}>0=\beta_{m+1\overline{m+1}}=\cdots=\beta_{n\bar{n}}
		\end{equation}
		for some positive integer $m\leq n$. Denote $$\lambda_j=\sqrt{\beta_{j\bar{j}}}>0,\ 1\leq j\leq m.$$
		From the Bochner type formula \eqref{Bochner}, we have
		\begin{align}\label{eq1}
			0=e^{-\varphi}\p_v\p_{\bar{v}}(e^\varphi|\eta|^2_g)=|\eta|^2_g\p_v\p_{\bar{v}}\varphi+\vert D'_v\eta+\eta\p_v\varphi|^2+p\sum\limits_{j=1}^mR_{v\bar{v}j\bar{j}}\beta_{j\bar{j}}\geq p\sum\limits_{j=1}^mR_{v\bar{v}j\bar{j}}\beta_{j\bar{j}}\lambda_j^2
		\end{align}
		for all $v\in T_{x_0}^{1,0}M$. Let $t=(t_1,\cdots,t_m)$ be the complex coordinates of $\C^m$ and $d\theta$ be the standard volume measure of the unit sphere $S_{2m-1}:=\{t\in \C^m:\sum\limits_{i=1}^m\vert t_i\vert^2=1 \}$. By the Berger's averaging trick (\cite{berger}), we get
		\begin{equation}\label{eq2}
			\sum\limits_{i,j,k,l=1}^m\fint\limits_{ S_{2m-1}} R_{i\bar{j}k\bar{l}}\lambda_i t_i\overline{\lambda_j} \overline{t_j}\lambda_k t_k\overline{\lambda_l} \overline{t_l} d\theta(t)=\frac{2}{m(m+1)}\sum\limits_{i,j=1}^mR_{i\bar{i}j\bar{j}}\lambda_i^2\lambda_j^2\leq 0.
		\end{equation}
		For any $t\in S_{2m-1}$,
		\begin{equation}\label{eq3}
			\sum\limits_{i,j,k,l=1}^mR_{i\bar{j}k\bar{l}}\lambda_i t_i\overline{\lambda_j} \overline{t_j}\lambda_k t_k\overline{\lambda_l} \overline{t_l}=R(V_t,\overline{V_t},V_t,\overline{V_t})\geq0,
		\end{equation}
		where $V_t:=\sum\limits_{i=1}^m\lambda_it_i\frac{\p}{\p z^i}$. Combining \eqref{eq2} with \eqref{eq3}, $R(V_t,\overline{V_t},V_t,\overline{V_t})=0,\ \forall t\in S_{2m-1}$. Taking $t_0\in S_{2m-1}$ such that only the $l$-th component is $1$ and all other components are 0, then $V_{t_0}=\lambda_l\frac{\p}{\p z^l}$ and
		\begin{equation}\label{eq4}
			R_{l\bar{l}l\bar{l}}=\frac{1}{\lambda_l^4}R(V_{t_0},\overline{V_{t_0}},V_{t_0},\overline{V_{t_0}})=0,\ \forall 1\leq l\leq m.
		\end{equation}
		Since the holomorphic sectional curvature is nonnegative, $\frac{\p}{\p z^l}$ minimizes the holomorphic sectional curvature of $\w$ at $x_0$. It follows from Lemma \ref{L2} that
		$$R_{l\bar{l}v\bar{v}}\geq0,\ \forall 1\leq l\leq m,v\in T^{1,0}_{x_0}M.$$
		Since $\p_v\p_{\bar{v}}\varphi\geq0$, \eqref{eq1} implies that
		$R_{l\bar{l}v\bar{v}}=0,\ \forall 1\leq l\leq m$
		and $$\p_v\p_{\bar{v}}\varphi=0,\ D'_v\eta+\eta\p_v\varphi=0$$
		for all $v\in T^{1,0}_{x_0}M.$
		Hence $\{\frac{\p}{\p z^i},\ 1\leq i\leq m\}$ are nonzero truly flat tangent vectors at $x_0$ by Lemma \ref{L3}.
		
		Since $x_0\in M$ is arbitrary, we have $\pp\varphi=0$ on $M$ and $D'\eta+\eta\p\varphi=0$ on $M$. The fact that $\pp\varphi_\alpha=0$ implies that $(\mF,e^{-\varphi})$ is a flat line bundle. In each coordinate neighborhood $U_\alpha$, $\varphi_\alpha$ is pluriharmonic, so it can be written as $\varphi_\alpha=f_\alpha+\overline{f_\alpha}$ for some holomorphic function $f_\alpha\in\mO(U_\alpha)$. Therefore,
		\begin{equation}\label{para}
			D(e^{f_\alpha}\eta_\alpha)=e^{f_\alpha}(D'\eta_\alpha+\eta_\alpha\p\varphi_\alpha)=0
		\end{equation}
		where $e^{f_\alpha}\eta_\alpha$ is a local holomorphic section of $\mF|_{U_\alpha}$. This shows that $\mF$ is invariant by parallel transport induced by connection $D$ of $\Lambda^{p,0}M$, which verifies the statement (1).
		
		We obtain a global real semi-positive $(1,1)$-form given by $$\beta:=e^{\varphi_\alpha}\beta_\alpha=\Lambda^{p-1}\left((\im)^{p^2}\frac{e^{f_\alpha\eta_\alpha}\wedge\overline{e^{f_\alpha}\eta_\alpha}}{p!}\right),$$
		which is determined by $\mF$ and is parallel with respect to $D$. We then define a parallel and self-adjoint linear transformation $S: T^{1,0}M\rightarrow T^{1,0}M$ by
		\begin{equation}\label{trans}
			\langle S(X),Y \rangle_g=\beta(X,\bar{Y}),\ \forall X,Y\in T^{1,0}M.
		\end{equation}
		From the above computation at a fixed point $x_0$, $V_{x_0}=\spaned\{\frac{\p}{\p z^i}\}_{i=1}^m$ and $W_{x_0}=\spaned\{\frac{\p}{\p z^i}\}$ consist of eigenvectors corresponding to nonzero and zero eigenvalues of $S_{x_0}$, respectively. We also have that every $v\in T_{x_0}^{1,0}M$ is truly flat and $\dim V_{x_0}=m>0$. Since $S$ is parallel, $x\mapsto V_x$ and $x\mapsto W_x$ define parallel distrubutions. The proof is complete.
	\end{proof}
	
	\subsection{Proof of Theorem \ref{quasi} and Theorem \ref{sc}}
	In this subsection, we proceed to show Theorem \ref{quasi} and Theorem \ref{sc}. The proofs are based on Theorem \ref{t2}, Campana-Demailly-Peternell's critetion, de Rham's splitting theorem (\cite{dr52}).
	
	\begin{proof}[\bf Proof of Theorem \ref{quasi}]
		Since the holomorphic sectional curvature is nonnegative and there is no nonzero truly flat tangent vector at some point, by Theorem \ref{t2} any invertible subsheaf $\mathcal{F}\subset\mathcal{O}(\Lambda^{p,0}M)$ is not pseudo-effective. This actually implies that $H^{p,0}_{\bp}(M)=0,\ \forall 1\leq p\leq n$. In particular, M is projective by Kodaira's theorem (\cite{Kod}). Combining this with Campana-Demailly-Peternell's criterion (Lemma \ref{cdp}), we conclude that $M$ is rationally connected.
	\end{proof}
	
	\begin{proof}[\bf Proof of Theorem \ref{sc}]
		We will argue by contradiction. Suppose there exists an invertible subsheaf $\mF$ of $\mO(\Lambda^{p,0}M)$ that is pseudoeffective as a line bundle. According to Theorem \ref{t2}, $T^{1,0}M$ admits a decomposition $V+W$ of $T^{1,0}M$ where $V$ and $W$ consist of eigenvectors corresponding to nonzero and zero eigenvalues of $S$, respectively, and $\forall p\in M$, every $v\in V_p$ is truly flat. Since $M$ is simply connected, by de Rham's splitting theorem (\cite[Theorem II]{dr52}), $(M,\w)\simeq (M_1,\w_1)\times (M_2,\w_2)$ isometrically and holomorphically, with $TM_1=V$. This implies that $M_1$ is a simply connected K\"ahler manifold with the flat metric $\w_1$. According to the classification of complex space forms, $M_1$ must be $\C^m$ with $m=\dim V>0$, which contradicts the fact that $M$ is compact. By Campana-Demailly-Peternell's criterion (Lemma \ref{cdp}), the proof is complete.
	\end{proof}
	
	\section{Proof of Theorem \ref{lowdim}}\label{23}
	In this section, we provide a proof of Theorem \ref{lowdim}. The proof relies on $L^2$-vanishing theorem (Proposition \ref{L2}), Atiyah's $L^2$-index Theorem (\cite{atiyah76}) and properties of Albanese maps. First, we present the following $L^2$-vanishing theorem for nonnegative holomorphic sectional curvature.
	\begin{proposition}\label{L2}
		Let $(M,\w)$ be a complete K\"ahler manifold with nonnegative holomorphic sectional curvature, then any $L^2$-holomorphic $(p,0)$-form is parallel.
	\end{proposition}
	
	The idea of proof is essentially the same as that for Theorem \ref{t2}, but with some minor differences, because $|\eta|^4_g$ may be not $L^1$-integrable. For the reader's convenience, we provide a brief proof in the appendix. From Proposition \ref{L2} and Atiyah's $L^2$-index theorem (\cite{atiyah76}), we derive the following proposition.
	\begin{proposition}\label{cha}
		Let $(M,\w)$ be a compact K\"ahler manifolds with nonnegative holomorphic sectional curvature. One of the two following situations occurs:
		\begin{itemize}
			\item[(\romannumeral1)] $\pi_1(M)$ is finite, then $M$ is projective and rationally connected.
			\item[(\romannumeral2)] $\pi_1(M)$ is infinite, then $\chi(M,\mO_X)=0$.
		\end{itemize}
	\end{proposition}
	
	\begin{proof}
		Let $f:\widetilde{M}\rightarrow M$ be the universal cover of $M$, then $(\widetilde{M},f^*\w)$ is a compact simple connected K\"ahler manifold with nonnegative holomorphic sectional curvature. If $\pi_1(M)$ is finite, then $(\widetilde{M},f^*\w)$ is compact simple connected, hence projective and rationally connected (Theorem \ref{sc}). Since a pseudoeffective invertible subsheaf $\mF$ of $\mO(\Lambda^{p,0}M)$ induces a pseudoeffective invertible subsheaf $f^*\mF$ of $\mO(\Lambda^{p,0}\widetilde{M})$, $M$ must be projective and rationally connected by the Compana-Demailly-Peternell's criterion (Lemma \ref{cdp}).
		
		If $\pi_1(M)$ is infinite, then $(\widetilde{M},f^*\w)$ is noncompact and complete. Denote the Hilbert space of $L^2$-harmonic form of degree $(p,q)$ by $H^{p,q}_{L^2}(\widetilde{M})$. According to $L^2$-Hodge theory (see \cite{gromov91}), a $L^2$ $(p,0)$-form is harmonic if and only if it is a holomorphic $(p,0)$-form. In general, $H^{p,q}_{L^2}(\widetilde{M})$ might be infinite-dimensional. Using the isometric action of $\Gamma:=\pi_1(X)$ on them, one can associate to them a nonnegative real number $\dim_\Gamma(H^{p,q}_{L^2}(\widetilde{M}))$ (c.f. \cite{atiyah76}). $\dim_\Gamma(H^{p,q}_{L^2}(\widetilde{M}))=0$ if and only if $H^{p,q}_{L^2}(\widetilde{M})=\{0\}$. By Atiyah's $L^2$-index theorem (c.f. \cite{atiyah76,gromov91}), we know that
		$$\chi(M,\mO_M)=\chi_{L^2}(\widetilde{M},\mO_{\widetilde{M}}):=\sum\limits_{k=0}^{\dim M}(-1)^k\dim_\Gamma(H^{0,q}_{L^2}(\widetilde{M})).$$
		By Proposition \ref{L2}, the norm $|\eta|_{f^*g}$ of any $L^2$-holomorphic $(p,0)$-form $\eta$ is equal to some constant. Because the volume of $(\widetilde{M},f^*\w)$ is infinite, any $L^2$-holomorphic $(p,0)$-form must be zero. Thus $H^{p,0}_{L^2}(M)=\{0\}$ for every $0\leq p\leq n$. By conjugation, $H^{0,p}_{L^2}(M)=\{0\}$ for every $0\leq p\leq n$. Consequently, $\chi(M,\mO_M)=0$.
	\end{proof}
	
	From Proposition \ref{L2}, we have the following fact about the Albanese map.
	\begin{proposition}\label{alb}
		Let $(M,\w)$ be a compact K\"ahler manifold with nonnegative holomorphic sectional curvature. Then the Albanese map $\alpha:=M\rightarrow A(M)$ is a submersion. In particular, it is surjective.
	\end{proposition}
	\begin{proof}
		Let \( q = \dim H^{1,0}_{\bp}(M) \) and let \(\{\eta_1, \cdots, \eta_q\}\) be a basis of holomorphic \((1,0)\)-forms. Recall that the Albanese map \(\alpha\) is defined by
		\[
		x \mapsto \left(\int_a^x \eta_1, \cdots, \int_a^x \eta_q \right) \mod \Lambda,
		\]
		where \(\Lambda \subset \mathbb{C}^q\) (see, e.g., \cite[Chapter \text{VI}, section 9.2]{demaillycomplex}). Then \(d\alpha = (\eta_1, \cdots, \eta_q)\) at every point. If \(\rank(d\alpha) < q = \dim(A(M))\) at some point \(x\), then there would exist a non-zero linear combination
		\[
		u = \lambda_1 \eta_1 + \cdots + \lambda_q \eta_q
		\]
		with \(u(x) = 0\). By Proposition \ref{L2}, \(u\) must be zero, which leads to a contradiction.
	\end{proof}
	
	Now let us proceed to show Theorem \ref{lowdim}.
	\begin{proof}[\bf Proof of Theorem \ref{lowdim}]
		Since $M$ is non-projective, it follows from Kodaira's theorem (\cite{Kod}) that $M$ admits a nonzero holomorphic $(2,0)$-form, and from Proposition \ref{cha} that $\chi(M,\mO_M)=0$. Let $h^{p,0}$ denote $\dim H^{p,0}_{\bp}(M)$.
		
		If $h^{3,0}>0$, there admits a nonzero holomorphic $(3,0)$-form $\eta$. By Proposition \ref{L2}, $\eta$ must be parallel and therefore doesn't vanish anywhere. Since $\dim M=3$, the existence of the nowhere-vanishing holomorphic $(3,0)$-form $\eta$ implies that $K_M\simeq \mO_M$. Consequently, $K_M$ is flat, and we have:
		$$0=-n\int_Mc_1(K_M)\wedge\w^{n-1}=n\int_M\rc(\w)\wedge\w^{n-1}=\int_MS\cdot\w^n,$$
		where $S$ is the scalar curvature of $\w$. According to Berger's averaging trick (\cite{berger}),
		$$S(x)=\fint_{V\in T^{1,0}_xM, |V|_g=1}H(V)d\theta(V)\geq0$$
		at any point $x$ of $M$. Hence, the holomorphic sectional curvature is identically zero, and thus $M$ is flat.
		
		If $h^{3,0}=0$, then $1-h^{1,0}+h^{2,0}=\chi(M,\mO_M)=0$, which implies that $h^{1,0}=1+h^{2,0}\geq2$. Suppose that $h^{1,0}=3$. Then we have $h^{3,0}\geq1$, which is a contradiction. Therefore, $h^{1,0}=2$. From Matsumura's result (Lemma \ref{morp}) and Proposition \ref{alb}, the Albanese map $\alpha: M\rightarrow \T^2$ is locally trivial and smooth. The fibre $F$ of $\alpha$ admits a K\"ahler metric $\w_F$ with nonnegative holomorphic sectional curvature, and the universal cover $(\widetilde{M},\widetilde{\w})\cong(\widetilde{F},\widetilde{\w_F})\times (\C^2,\w_0),$ where $(\widetilde{M},\widetilde{\w})$ and $(\widetilde{F},\widetilde{\w_F})$ are the universal covers of $(M,\w)$ and $(F,\w_F)$ respectively, $\w_0$ is the flat metric. Since $\dim F=1$, the holomorphic sectional curvature of $\w_F$ is either identically zero or quasi-positive. Hence, either $M$ is flat, or $F$ is $\CP^1$. When $F$ is $\CP^1$, $M$ is a $\CP^1$-bundle over a torus $\T^2$, and the universal cover is $\CP^1\times \C^2$.
	\end{proof}
	
	\section{Proof of Theorem \ref{t3}}\label{rbc}
	In this section, we provide a proof of Theorem \ref{t3}.
	\begin{proof}[\bf Proof of Theorem \ref{t3}]
		Denote $n=\dim M$ and $r=\rank E$. For any $t\in \N_+,\ m\in [0,k]\bigcap \N$ and $I_k=(i_1,\cdots,i_k)\in \N_+^k$ for some $k\in\N$,  we define
		$$I_{k}(m,t)=(i_1,\cdots,i_{m-1},t,i_m,\cdots,i_{k})\in \N_+^{k+1}.$$
		We equip $E$ with a Hermitian metric $H$ and denote its chern curvature tensor by $R^H$. Set
		\begin{equation}
			\delta_2=\min\limits_{x\in M}\min_{u\in T^{1,0}_xM\setminus0,V\in E_x\setminus0}-\frac{R^H(u,\bar{u},V,\bar{V})}{\vert u\vert^2_h\vert V\vert^2_H}.
		\end{equation} Since $(M,h)$ has positive real bisectional curvature, there exists $\delta_1>0$ such that for any $x\in M$, any unitary basis $\{\frac{\p}{\p z^1},\cdots,\frac{\p}{\p z^n}\}$ of $T_x^{1.0}M$ and any nonnegative constants $a=\{a_1,\cdots,a_n\}$ with $\vert a\vert^2=a_1^2+\cdots+a_n^2>0$, we have that
		\begin{equation}
			\sum\limits_{i,j=1}^n R_{i\bar{i}j\bar{j}}a_ia_j>\delta_1\vert a\vert^2.
		\end{equation}
		For any $x\in M$, we choose local holomorphic coordinates $(z^1,\cdots,z^n)$ centered at $x$ such that $h_{i\bar{j}}$ is the identity at $x$. Let $\psi$ be any holomorphic section of $H^0(M,((T^{1,0}M)^*)^{\otimes p}\otimes E^{\otimes l})$. Locally $$\psi=\sum\limits_{I_p,J_l}a_{I_p,J_l}dz^{i_1}\otimes\cdots \otimes dz^{i_p}\otimes e^{j_1}\otimes\cdots\otimes e^{j_l}$$ with $I_p=(i_1,\cdots,i_p)\in \N_+^p,J_l=(j_1,\cdots,j_l)\in\N_+^l$, where $\{e^k\}_{k=1}^r$ is a local holomorphic basis of $E$ such that $(H_{t\bar{s}})$ is diagonal at $x$. By a direct computation, we have the following Bochner type formula,
		\begin{align*}
			\p_u\p_{\bar{u}}\vert \psi\vert^2
			=&\langle D'_u\psi,\overline{D'_u\psi}\rangle+\sum\limits_{i,j=1}^nR^h_{u\bar{u}i\bar{j}}\sum\limits_{m=1}^p\sum\limits_{I_{p-1},J_l}a_{I_{p-1}(m,i),J_l}\overline{a_{I_{p-1}(m,j),J_l}}\\
			&-\sum\limits_{t,s=1}^rR^H_{u\bar{u}t\bar{s}}\sum\limits_{k=1}^l\sum\limits_{I_p,J_{l-1}}a_{I_p,J_{l-1}(k,t)}\overline{a_{I_p,J_{l-1}(k,s)}}
		\end{align*}
		holds for all $u\in T_x^{1,0}M$. Define a real nonnegative $(1,1)$-form $\alpha=\im\alpha_{i\bar{j}}dz^i\wedge d\bar{z}^j$ by
		\begin{equation}
			\alpha_{i\bar{j}}=\sum\limits_{m=1}^p\sum\limits_{I_{p-1},J_l}a_{I_{p-1}(m,i),J_l}\overline{a_{I_{p-1}(m,j),J_l}}=\sum\limits_{m=1}^p\langle \iota^m_{\frac{\p}{\p z^i}}(\psi),\overline{\iota^m_{\frac{\p}{\p z^j}}(\psi)}\rangle
		\end{equation}
		with $\iota^m_{\frac{\p}{\p z^i}}(\psi)=\psi(\cdots,\frac{\p}{\p z^i},\cdots;\cdots)$ where the contractions are done at the $m$-th slot. In a similar way, define a section $\beta=\beta_{t\bar{s}}e^t\otimes \overline{e^s}$ of $E\otimes \overline{E}$ such that the components $(\beta_{t\bar{s}})$ is a nonnegative Hermitian matrix by
		\begin{equation}
			\beta_{t\bar{s}}=\sum\limits_{k=1}^l\sum\limits_{I_p,J_{l-1}}a_{I_p,J_{l-1}(k,t)}\overline{a_{I_p,J_{l-1}(k,s)}}=\sum\limits_{k=1}^l\langle \iota^{p+k}_{(e^t)^*}(\psi),\overline{\iota^{p+k}_{(e^s)^*}(\psi)}\rangle
		\end{equation}
		with $\iota^{p+k}_{(e^t)^*}(\psi)=\psi(\cdots;\cdots,(e^t)^*,\cdots)$ where the contractions are done at the $(p+k)$-th slot. Now we can reselect the local holomorphic coordinates of $M$ and the local holomorphic basis of $E$ such that $(h_{i\bar{j}}),(H_{t\bar{s}})$ are still the identity and $(\alpha_{i\bar{j}}),(\beta_{t\bar{s}})$ are diagonal at $x$, then
		$
		\sum\limits_{i=1}^n\alpha_{i\bar{i}}=p\vert\psi\vert^2
		$
		and
		$
		\sum\limits_{t=1}^r\beta_{t\bar{t}}=l\vert\psi\vert^2.
		$
		So, for any holomorphic section $\psi\in H^0(M,((T^{1,0}M)^*)^{\otimes p}\otimes E^{\otimes l})$,
		\begin{align*}
			\langle \im \pp\vert \psi\vert^2,\alpha\rangle_h=&\sum\limits_{i=1}^n\p_i\p_{\bar{i}}\vert\psi\vert^2\alpha_{i\bar{i}}\\
			\geq& \sum\limits_{i,j=1}^nR^h_{i\bar{i}j\bar{j}}\alpha_{i\bar{i}}\alpha_{j\bar{j}}-\sum\limits_{i=1}^n\sum\limits_{t=1}^r R^H_{i\bar{i}t\bar{t}}\alpha_{i\bar{i}}\beta_{t\bar{t}}\\
			\geq& \delta_1\sum\limits_{i=1}^n\vert\alpha_{i\bar{i}}\vert^2+\delta_2\sum\limits_{i=1}^n\alpha_{i\bar{i}}\sum\limits_{t=1}^r\beta_{t\bar{t}}\\
			\geq&\delta_1\frac{(\sum\limits_{i=1}^n\alpha_{i\bar{i}})^2}{n}+\delta_2(\sum\limits_{i=1}^n\alpha_{i\bar{i}})(\sum\limits_{t=1}^r\beta_{t\bar{t}})\\
			=&(p+\frac{n\delta_2}{\delta_1}l)\frac{p\delta_1\vert\psi\vert^4}{n}
		\end{align*}
		holds for any $x\in M$. Let $C_E=\max\{1,-\frac{n\delta_2}{\delta_1}+2\}$. We assume that $\vert \psi \vert^2$ achieves its maximum at some point $x_0\in M$. If $p,l\in\N_+$ with $p\geq C_E l$ and $\psi\neq0$, then we have
		$$0\geq\langle \im \pp\vert \psi\vert^2,\alpha\rangle_h\geq (p+\frac{n\delta_2}{\delta_1})\vert\psi\vert^4\geq2l\vert\psi\vert^4>0$$
		at $x_0$, which is a contradiction. Hence
		\begin{equation}\label{vanish}
			H^0(M,((T^{1,0}M)^*)^{\otimes p}\otimes E^{\otimes l})=0\ \text{ for any } p\geq C_E l.
		\end{equation}
		The vanishing theorem \eqref{vanish} implies that $H^{2,0}_{\bp}(M)=0$. In particular, if $M$ is K\"{a}hlerian, $M$ is projective by Kodaira's theorem (\cite{Kod}). Then \eqref{vanish} and Campana-Peternell-Demailly's criterion (Lemma \ref{cdp}) imply that $M$ is rationally connected. The proof is completed.
	\end{proof}
	
	\section*{Appendix. Proof of $L^2$-vanishing Theorem}
	\begin{proof}[\bf Proof of Proposition \ref{L2}]
		Since $\w$ is complete, there exists an exhaustive sequence $(K_v)_{v\in\N}$ of compact subsets of $M$ and functions $\psi_v\in C^\infty(M,\R)$ (see. e.g. \cite[Section 8.2]{demaillycomplex}) such that
		\begin{itemize}
			\item $\psi_v=1$ in a neighborhood of $K_v$, $\supp\psi_v\subset K^\circ_{v+1}$,
			\item $0\leq\psi_v\leq 1$ and $|d\psi_v|^2_g\leq 2^{-v} \psi_v$ on $M$.
		\end{itemize}
		Let $\eta$ be a nonzero $L^2$-holomorphic $(p,0)$-form, then $|\eta|\in W^{1,\infty}_{loc}(M)$ and $\Sigma:=\{x\in M: \eta(x)=0\}$ is a proper subvariety of $M$, from Lemma \ref{L1} and Bochner-type formula \eqref{Bochner}, we have
		\begin{equation}\label{ibp2}
			\begin{split}
				&-\int_M \langle \p\psi_v,\bp|\eta|_g^2\rangle_g\cdot\w^n=\int_M\psi_v\Delta|\eta|_g^2\cdot\w^n\\
				\geq&\int_M \psi_v\langle \im\langle D'\eta,\overline{D'\eta} \rangle,\frac{\beta}{|\eta|_g^2}\rangle\cdot\w^n+\int_M\psi_v\cdot\kappa|\eta|_g^2\cdot\w^n+\frac{n!(\im)^{p^2}}{(n-p-1)!}\int_M\psi_v\im\pp|\eta|_g^2\wedge\frac{\eta\wedge\bar{\eta}}{|\eta|_g^2}\wedge\w^{n-p-1}\\
				\geq&\int_M \psi_v\langle \im\langle D'\eta,\overline{D'\eta} \rangle,\frac{\beta}{|\eta|_g^2}\rangle\cdot\w^n+\frac{n!(\im)^{p^2}}{(n-p-1)!}\int_M\psi_v\im\pp|\eta|_g^2\wedge\frac{\eta\wedge\bar{\eta}}{|\eta|_g^2}\wedge\w^{n-p-1}
			\end{split}
		\end{equation}
		where $\beta=\Lambda^{p-1}(\frac{\eta\wedge\overline{\eta}}{p!})$. Recall \eqref{tr1} that $\tr_\w\beta=|\eta_g|^2$, then $|\frac{\beta}{|\eta|_g^2}|_g\leq 1$ and $|\frac{\eta\wedge\bar{\eta}}{|\eta|_g^2}|_g\leq 1$ on $M\setminus \Sigma$, thus each integral on the RHS of \eqref{ibp2} makes senses. For any fixed point $x_0\in M$, we choose local holomorphic coordinates $(z^1,\cdots,z^n)$ centered at $x_0$ such that $(g_{i\bar{j}})$ is the identity and $(\beta_{i\bar{j}})$ is diagonal, then
		\begin{equation}
			4\langle\im\p|\eta|_g\wedge\bp|\eta|_g,\beta\rangle_g=\frac{\p_i|\eta|_g^2\cdot{\p}_{\bar{i}}|\eta|_g^2}{|\eta|_g^2}\beta_{i\bar{i}}=\frac{\langle D'_i\eta,\bar{\eta}\rangle\cdot\langle\eta,\overline{D'_i\eta}\rangle}{|\eta|_g^2}\beta_{i\bar{i}}\leq \langle \im\langle D'\eta,\overline{D'\eta} \rangle,\beta\rangle.
		\end{equation}
		As the $L^2$-holomorphic $(p,0)$-form $\eta$ is closed (see \cite{gromov91}), applying Lebesgue's dominated covergence theorem, we have
		\begin{equation}
			\begin{split}
				0=&\int_M\p(\psi_v\bp|\eta|_g^2\wedge\frac{\eta\wedge\bar{\eta}}{|\eta|_g^2}\wedge\w^{n-p-1})\\
				=&\lim_{\epsilon\rightarrow0}\int_M\p(\psi_v\bp|\eta|_g^2\wedge\frac{\eta\wedge\bar{\eta}}{|\eta|_g^2+\epsilon}\wedge\w^{n-p-1})\\
				=&\lim_{\epsilon\rightarrow0}\int_M\left(\p\psi_v\wedge\bp|\eta|_g^2\wedge\frac{\eta\wedge\bar{\eta}}{|\eta|_g^2+\epsilon}+\psi_v\pp|\eta|_g^2\wedge\frac{\eta\wedge\bar{\eta}}{|\eta|_g^2+\epsilon}-4\psi_v\p|\eta|_g\wedge\bp|\eta|_g\wedge\frac{|\eta|_g^2}{(|\eta|_g^2+\epsilon)^2}\eta\wedge\bar{\eta}\right)\wedge\w^{n-p-1}\\
				=&\int_M\left(\p\psi_v\wedge\bp|\eta|_g^2\wedge\frac{\eta\wedge\bar{\eta}}{|\eta|_g^2}+\psi_v\pp|\eta|_g^2\wedge\frac{\eta\wedge\bar{\eta}}{|\eta|_g^2}-4\psi_v\p|\eta|_g\wedge\bp|\eta|_g\wedge\frac{\eta\wedge\bar{\eta}}{|\eta|_g^2}\right)\wedge\w^{n-p-1}.
			\end{split}
		\end{equation}
		Applying Lemma \ref{L1}, we have
		\[
		\begin{aligned}
			&\int_M \psi_v|\p|\eta|_g|_g^2\cdot\w^n
			=\int_M\psi_v\tr_\w(\im\p|\eta|_g\wedge\bp|\eta|_g)\cdot\w^n\\
			=&\int_M\psi_v\langle\im\p|\eta|_g\wedge\bp|\eta|_g,\frac{\beta}{|\eta|_g^2}\rangle_g\cdot\w^n+\frac{(\im)^{p^2}n!}{(n-p-1)!}\int_M\psi_v\im\p|\eta|_g\wedge\bp|\eta|_g\wedge\frac{\eta\wedge\bar{\eta}}{|\eta|_g^2}\wedge\w^{n-p-1}\\
			\leq&\frac{1}{4}\left(\int_M \psi_v\langle \im\langle D'\eta,\overline{D'\eta} \rangle,\frac{\beta}{|\eta|_g^2}\rangle\cdot\w^n+\frac{(\im)^{p^2}n!}{(n-p-1)!}\lim_{\epsilon\rightarrow0}\int_M\psi_v\im\p|\eta|_g\wedge\bp|\eta|_g\wedge\frac{\eta\wedge\bar{\eta}}{|\eta|_g^2+\epsilon}\wedge\w^{n-p-1}\right)\\
			=&\frac{1}{4}\left(\int_M \psi_v\langle \im\langle D'\eta,\overline{D'\eta} \rangle,\frac{\beta}{|\eta|_g^2}\rangle\cdot\w^n+\frac{n!(\im)^{p^2}}{(n-p-1)!}\int_M\psi_v\im\pp|\eta|_g^2\wedge\frac{\eta\wedge\bar{\eta}}{|\eta|_g^2}\wedge\w^{n-p-1}\right)\\
			&-\frac{(\im)^{p^2}n!}{4(n-p-1)!}\int_M\im\p\psi_v\wedge\bp|\eta|^2_g\wedge\frac{\eta\wedge\bar{\eta}}{|\eta|_g^2}\wedge\w^{n-p-1}\\
			\leq &C_1\int_M|\p\psi_v|_g\cdot\big\vert\p|\eta|_g\big\vert\cdot|\eta|_g\cdot\w^n
		\end{aligned}
		\]
		for some constant $C_1>0$ independent of $v\in\N$. The fact that $|\p\psi_v|_g^2\leq 2^{-v}\psi_v$ and the Schwarz inequality yields
		$$\int_M\psi_v\big|\p|\eta|_g\big|_g^2\cdot\w^n\leq 2^{-v}C_1||\eta||_{L^2},$$
		hence $\int_M\psi_v\big|\p|\eta|_g\big|_g^2\cdot\w^n\rightarrow0$ for $v\rightarrow\infty$, which implies that $|\eta|_g\equiv C$ for some constant $C\geq0$. Applying the pointwise computation in the proof of Theorem \ref{t2}, the fact \eqref{para} tells us that $D\eta=0$. The proof is completed.
	\end{proof}

	\bigskip
	
	\normalem
	\bibliographystyle{plain}

\begin{thebibliography}{10}
		\bibitem{atiyah76}
		M. Atiyah, {\em Elliptic operators, discrete groups and von Neumann algebras,} Astérisque {\bf 32} (1976), No. 33, 43-72.
		
		\bibitem{bea83}
		A. Beauville, {\em Vari{\'e}t{\'e}s K{\"a}hleriennes dont la premiere classe de Chern est nulle,} Journal of Differential Geometry. {\bf 18} (1983) No. 4, 755-782.
		
		\bibitem{berger}
		M. Berger,
		{\em Sur les vari{\'e}t{\'e}s d'{Einstein} compactes} (French), Comptes Rendus de la IIIe
		R{\'e}union du Groupement des Math{\'e}maticiens d’Expression Latine. (Namur, 1965), Librairie
		Universitaire. (Louvain, 1966), 35–55.
		
		\bibitem{bt82}
		E. Bedford and B.A. Taylor, {\em A new capacity for plurisubharmonic functions,} Acta
		Math. {\bf 149} (1982) 1–41.
		
		\bibitem{be55}
		M. Berger, {\em Sur les groupes d’holonomie des vari{\'e}t{\'e}s {\`a} connexion affine des
			vari{\'e}t{\'e}s riemanniennes,} Bulletin de la Soci{\'e}t{\'e} Math{\'e}matique de France. {\bf 83} (1955), 279-330.
		
		\bibitem{bes87}
		A.L. Besse, {\em Einstein manifolds,} Springer-Verlag, New York. (1987).
		
		\bibitem{BDPP}
		S. Boucksom, J.-P. Demailly, M. P{\u{a}}un, and T. Peternell, {\em The pseudo-effective cone of a compact K{\"a}hler manifold and varieties of negative Kodaira dimension,} Journal of Algebraic Geometry. {\bf 22} (2013), No. 2, 201-248.
		
		\bibitem{Cam16} F. Campana, {\em Orbifold slope rational connectedness}, arXiv:1607.07829v2.
		
		\bibitem{campana2015} F. Campana,  J.-P. Demailly and T. Peternell, {\em Rationally connected manifolds and semipositivity
			of the Ricci curvature},  Recent advances in Algebraic Geometry. A Volume in Honor of Rob Lazarsfeld’s 60th Birthday, 71--91.
		
		\bibitem{cg71}
		J. Cheeger and D. Gromoll, {\em The splitting theorem for manifolds of nonnegative Ricci curvature,} Journal of Differential Geometry. {\bf 6} (1971), No. 3, 119-128.
		
		\bibitem{dem92}
		J.-P. Demailly, {\em Regularization of closed positive currents and intersection theory,} J. Algebraic Geom. {\bf 1} (1992), No. 3, 361-409.
		
		\bibitem{de93}
		J.-P. Demailly {\em numerical criterion for very ample line bundles,} Journal of Differential Geometry. {\bf 37} (1993), No.3, 323-374.	
		
		\bibitem{de02}
		J.-P. Demailly {On the Frobenius integrability of certain holomorphic p-forms,} Complex Geometry: Collection of Papers Dedicated to Hans Grauert (2002), 93-98.
		
		\bibitem{demaillycomplex}
		J.-P. Demailly, {\em Complex {A}nalytic and {D}ifferential geometry,} electronically accessible at https://www-fourier.ujf-grenoble.fr/\%7eDemailly/manuscripts/agbook.pdf
		
		\bibitem{de10}
		J.-P. Demailly, {\em Analytic Methods in Algebraic Geometry,} Higher Education Press, Surveys of
		Modern Mathematics. {\bf 1} (2010).
		
		\bibitem{dps}
		J.-P. Demailly, T. Peternell, and M. Schneider, {\em Compact complex manifolds with numerically effective tangent bundles,} Journal of Algebraic Geometry. {\bf 3} (1994), No. 2, 295-346.
		
		\bibitem{dr52}
		G. de Rham, {\em Sur la reductibilité d’un espace de Riemann,} Comment. Math.
		Helv. {\bf 26} (1952), 328–344.
		
		\bibitem{dt19}
		S. Diverio, and S. Trapani, {\em Quasi-negative holomorphic sectional curvature and positivity of the canonical bundle,} J. Differential Geom. {\bf 111} (2019), No. 2, 303-314.
		
		\bibitem{GHS} T. Graber, J. Harris and J. Starr, {\em Families of rationally connected varieties}, J. Amer. Math. Soc. {\bf 16} (2003), No. 1, 57--67.
		
		\bibitem{gromov91}
		M. Gromov, {\em K\"{a}hler hyperbolicity and $ L_2 $-Hodge theory,} Journal of Differential geometry. {\bf 33} (1991), No. 1, 263-292.
		
		\bibitem{heier2020}
		G. Heier and B. Wong,
		\newblock {\em On projective {K}{\"a}hler manifolds of partially positive curvature
			and rational connectedness,}
		\newblock Documenta Mathematica. {\bf 25} (2020), 219--238.
		
		\bibitem{HLW16}
		G. Heier, S. S. Y. Lu, and B. Wong, {\em K{\"a}hler manifolds of semi-negative holomorphic sectional curvature,} J. Differential Geom. {\bf 104} (2016), No. 3, 419–441.
		
		\bibitem{heier2018}
		G. Heier, S. S. Y. Lu, B. Wong and F.Y. Zheng, {\em Reduction of manifolds with semi-negative holomorphic sectional curvature,} Math. Ann. {\bf 372} (2018), 951–962.
		
		\bibitem{HSW}
		A. Howard, B. Smyth, and H. Wu, {\em On compact K{\"a}hler manifolds of nonnegative bisectional curvature \MakeUppercase{\romannumeral 1},} Acta Math. {\bf 147} (1981), No. 1-2, 51–56.
		
		\bibitem{Kod} K. Kodaira, {\em On K\"ahler varieties of restricted type (an intrinsic characterization of algebraic
			varieties),} Annals of Mathematics. {\bf 60} (1954), 28-48.
		
		\bibitem{KMM} J. Koll{\'a}r, Y. Miyaoka  and S. Mori, {\em  Rationally connected varieties}, J. Algebraic
		Geom. {\bf 1} (1992), No. 3, 429-448.
		
		\bibitem{LP17} V. Lazi\'c and T. Peternell, {\em Rationally connected varieties--On a conjecture of Mumford}, Sci. China Math. {\bf 60} (2017), 1019-1028.
		
		\bibitem{ls2021}
		M.-C. Lee, and J. Streets, {\em Complex manifolds with negative curvature operator,} International Mathematics Research Notices. (2021), No. 24, 18520-18528.	
		
		\bibitem{li2021}
		C. Li, C. Zhang, and X. Zhang,
		\newblock {\em Mean curvature negativity and {HN}-negativity of holomorphic vector
			bundles,}
		\newblock arXiv:2112.00488.
		
		\bibitem{matsumura2020}
		S. Matsumura,
		{\em On the image of MRC fibrations of projective manifolds with nonnegative holomorphic sectional curvature,}
		Pure and Applied Mathematics Quarterly. {\bf 16} (2020), No. 5, 1443--1463.
		
		\bibitem{matsumura2022}
		S. Matsumura,
		\newblock {\em On projective manifolds with semi-positive holomorphic sectional
			curvature,}
		\newblock American Journal of Mathematics. {\bf 144} (2022), No. 3, 747--777.
		
		\bibitem{matsumuraopen}
		S. Matsumura,
		{\em Open problems on structure of positively curved projective varieties,}
		{Annales de la Facult\'e des sciences de Toulouse: Math\'ematiques}, Serie 6. {\bf 31} (2022), No. 3, 1011--1029.
		
		\bibitem{mok}
		N. Mok, {\em The uniformization theorem for compact K{\"a}hler manifolds of nonnegative holomorphic bisectional curvature,} J. Differential Geom. {\bf 27} (1988), No. 2, 179–214.
		
		\bibitem{ni2021}
		L. Ni,
		\newblock {\em The fundamental group, rational connectedness and the positivity of
			{K}{\"a}hler manifolds,}
		\newblock Journal f{\"u}r die reine und angewandte Mathematik (Crelle's
		Journal). {\bf 774} (2021), 267--299.
		
		\bibitem{ni2018}
		L. Ni and F. Zheng,
		\newblock {\em Comparison and vanishing theorems for {K}{\"a}hler manifolds,}
		\newblock Calculus of Variations and Partial Differential Equations.
		{\bf 57} (2018). No. 151, 1--31.
		
		\bibitem{ni2022}
		L. Ni and F. Zheng,
		\newblock {\em Positivity and the {K}odaira embedding theorem,}
		\newblock Geometry \& Topology. {\bf 26} (2022), No. 6, 2491--2505.
		
		\bibitem{Pet}
		T. Peternell, {\em  Kodaira dimension of subvarieties II}, Internat. J. Math. {\bf 17} (2006), No. 5, 619-631.
		
		\bibitem{royden1980ahlfors}
		H. L. Royden, {\em The {A}hlfors-{S}chwarz lemma in several complex variables,} Commentarii Mathematici Helvetici. {\bf 55} (1980), No. 1, 547--558.
		
		\bibitem{tang2019}
		K. Tang, {\em Positive curvature operator, projective manifold and rational connectedness,} arXiv:1905.04894v2.
		
		\bibitem{wy16a}
		D. Wu and S.-T. Yau, {\em Negative holomorphic curvature and positive canonical bundle,} Invent. Math. {\bf 204} (2016), No. 2, 595–604.
		
		\bibitem{wy16b}
		D. Wu and S.-T. Yau, {\em A remark on our paper “Negative holomorphic curvature and positive canonical bundle,} Comm. Anal. Geom.
		{\bf 24} (2016), No. 4, 901–912.
		
		\bibitem{yang2018}
		X. Yang,
		\newblock {\em {RC}-positivity, rational connectedness and {Y}au's conjecture,}
		\newblock Cambridge Journal of Mathematics. {\bf 6} (2018), No. 2, 183--212.
		
		\bibitem{yang2020}
		X. Yang,
		\newblock {\em {RC}-positive metrics on rationally connected manifolds,}
		\newblock Forum of Mathematics, Sigma. {\bf 8} (2020), e53, 1-19.
		
		\bibitem{YangZheng}
		X. Yang, F. Zheng, {\em On real bisectional curvature for Hermitian manifolds,}
		Transactions of the American Mathematical Society.  {\bf 371} (2019), No.4, 2703–2718.
		
		\bibitem{yau1982}
		S.-T. Yau,
		\newblock {\em Seminar on differential geometry,}
		\newblock Ann. of Math Stud. {\bf 102} (1982), 669–706.
	\end{thebibliography}

\end{document}